
\documentclass{amsart}
%
%
\usepackage{amsmath,amsthm,amssymb,amscd}
 \newtheorem{thm}{Theorem}[section]
 
 \newtheorem{lem}[thm]{Lemma}

 \theoremstyle{definition}
 \newtheorem{defn}[thm]{Definition}
 \theoremstyle{remark}

 \numberwithin{equation}{section}

\begin{document}

%
%
%
%
%
%
%
%
%

\title[On Andrews' integer partitions with even parts below odd parts]
 {On Andrews' integer partitions with even parts below odd parts}

\author{Chiranjit Ray}

\address{Harish-Chandra Research Institute, Prayagraj (Allahabad), India, PIN-211019}
\email{chiranjitray.m@gmail.com}

\author{Rupam Barman}
\address{Department of Mathematics, Indian Institute of Technology Guwahati, Assam, India, PIN- 781039}
\email{rupam@iitg.ac.in}

\date{Revised: January 3, 2020}

\thanks{We thank Professor Scott Ahlgren who helped us in finding a proof of an important step in Theorem 1.2. We also thank the referee for helpful comments.
The first author acknowledges the financial support of Department of Atomic Energy, Government of India and Harish-Chandra Research Institute for providing research facilities.}

\subjclass{Primary 05A17, 11P83}

\keywords{Partitions; congruences; modular forms;  Hecke eigenforms}

\dedicatory{}

\begin{abstract}
Recently, Andrews defined a partition function $\mathcal{EO}(n)$ which counts the number of partitions of $n$ in which every even part is less than each odd part. He also defined a partition function 
$\overline{\mathcal{EO}}(n)$ which counts the number of partitions of $n$ enumerated by $\mathcal{EO}(n)$ in which only the largest even part appears an odd number of times. 
Andrews proposed to undertake a more extensive investigation of the properties of $\overline{\mathcal{EO}}(n)$. In this article, we prove infinite families of congruences for $\overline{\mathcal{EO}}(n)$. 
We next study distribution of $\overline{\mathcal{EO}}(n)$. We prove that there are infinitely many integers $N$ in every arithmetic progression for which $\overline{\mathcal{EO}}(2N)$ is even; 
and that there are infinitely many integers $M$ in every arithmetic progression for which $\overline{\mathcal{EO}}(2M)$ is odd so long as there is at least one. We further prove that 
$\overline{\mathcal{EO}}(n)$ is even for almost all $n$. Very recently,  Uncu has treated a different subset of the partitions enumerated by $\mathcal{EO}(n)$. 
We prove that Uncu's partition function is divisible by $2^k$ for almost all $k$. We use arithmetic properties of modular forms and Hecke eigenforms to prove our results.
\end{abstract}

\maketitle
\section{Introduction and statement of results}
A partition of a nonnegative integer $n$ is a nonincreasing sequence of positive integers whose sum is $n$. 
In a recent paper, Andrews \cite{andrews_2018} studied the partition function $\mathcal{EO}(n)$ which counts the 
number of partitions of $n$ where every even part is less than each odd part. He denoted by $\overline{\mathcal{EO}}(n)$, the number of 
partitions counted by $\mathcal{EO}(n)$ in which \textit{only} the largest even part appears an odd number of times. For example, $\mathcal{EO}(8)=12$ with the relevant partitions being 
$8, 6+2, 7+1, 4+4, 4+2+2, 5+3, 5+1+1+1, 2+2+2+2, 3+3+2, 3+3+1+1, 3+1+1+1+1+1, 1+1+1+1+1+1+1+1+1$; and $\overline{\mathcal{EO}}(8)=5$, with the relevant partitions being $8, 4+2+2, 3+3+2, 3+3+1+1, 1+1+1+1+1+1+1+1$.
\par 
Andrews proved that the partition function $\overline{\mathcal{EO}}(n)$ has the following generating function \cite[Eqn. (3.2)]{andrews_2018}:
	\begin{align}\label{gen-2}
		\sum_{n=0}^{\infty}\overline{\mathcal{EO}}(n)q^n=\frac{(q^4; q^4)_{\infty}}{(q^2; q^4)_{\infty}^2}=\frac{(q^4; q^4)_{\infty}^3}{(q^2; q^2)_{\infty}^2},
	\end{align}
where $(a; q)_{\infty}:=\prod_{n\geq 0}(1-aq^n)$. In the same paper, he proposed to undertake a more 
extensive investigation of the properties of $\overline{\mathcal{EO}}(n)$. The objective of this paper is to study divisibility properties of $\overline{\mathcal{EO}}(n)$.
To be specific, we use the theory of Hecke eigenforms to establish the following two infinite families of congruences for $\overline{\mathcal{EO}}(n)$ modulo $2$ and $8$, respectively. 	
\begin{thm}\label{thm1}
	Let $k, n$ be nonnegative integers. For each $i$ with $1\leq i \leq k+1$, if $p_i \geq 5$ is prime such that $p_i \equiv 2 \pmod 3$, then for any integer $j \not\equiv 0 \pmod {p_{k+1}}$
	\begin{align*} 
	\overline{\mathcal{EO}}\left(p_1^2\dots p_{k+1}^2 n + \frac{p_1^2\dots p_{k}^2 p_{k+1}(3j+p_{k+1})-1}{3}\right) \equiv 0 \pmod 2.
\end{align*}
\end{thm}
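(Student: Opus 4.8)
The plan is to reduce the claim modulo $2$, identify the resulting $q$-series explicitly, and then reduce the statement to an elementary divisibility fact about the argument of $\overline{\mathcal{EO}}$. For the first step, note that $(1-q^m)^2\equiv 1-q^{2m}\pmod 2$, so $(q^2;q^2)_\infty^2\equiv (q^4;q^4)_\infty\pmod 2$, and hence, treating everything as a formal power series over $\mathbb Z/2\mathbb Z$, from \eqref{gen-2} we get
\begin{align*}
\sum_{n\ge 0}\overline{\mathcal{EO}}(n)q^n=\frac{(q^4;q^4)_\infty^3}{(q^2;q^2)_\infty^2}\equiv (q^4;q^4)_\infty^2\equiv (q^8;q^8)_\infty\pmod 2.
\end{align*}
By Euler's pentagonal number theorem $(q^8;q^8)_\infty=\sum_{k\in\mathbb Z}(-1)^kq^{4k(3k-1)}$; since $3\cdot 4k(3k-1)+1=(6k-1)^2$ and, for each fixed $N$, the equation $4k(3k-1)=N$ has at most one integer solution (the two roots of the associated quadratic sum to $1/3$), it follows that $\overline{\mathcal{EO}}(N)$ is odd precisely when $3N+1=(6k-1)^2$ for some $k\in\mathbb Z$; in particular $\overline{\mathcal{EO}}(N)\equiv 0\pmod 2$ as soon as $3N+1$ is not a perfect square. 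Equivalently, replacing $q$ by $q^3$ and multiplying by $q$ gives $\sum_n\overline{\mathcal{EO}}(n)q^{3n+1}\equiv\eta(24z)=\sum_{m\ge 1}\left(\tfrac{12}{m}\right)q^{m^2}\pmod 2$, so one may instead invoke that $\eta(24z)$ is a Hecke eigenform whose Fourier coefficients are supported on perfect squares; I expect the paper uses this modular reformulation to keep the argument parallel with its modulo $8$ companion result.

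It remains to analyse the argument of $\overline{\mathcal{EO}}$. Write $A=p_1^2\cdots p_k^2$ and $p=p_{k+1}$. Since each $p_i\equiv 2\pmod 3$ we have $p_i^2\equiv 1\pmod 3$, hence $Ap^2\equiv 1\pmod 3$, so that
\begin{align*}
N:=Ap^2n+\frac{Ap(3j+p)-1}{3}=Ap^2n+Apj+\frac{Ap^2-1}{3}
\end{align*}
is a nonnegative integer, and a short computation gives $3N+1=Ap\bigl(p(3n+1)+3j\bigr)=p_1^2\cdots p_k^2\,p_{k+1}\bigl(p_{k+1}(3n+1)+3j\bigr)$. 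Now $p=p_{k+1}\ge 5$, so $p\ne 3$, and since $j\not\equiv 0\pmod p$ we have $p\nmid 3j$; as $p\mid p(3n+1)$ this forces $p\nmid\bigl(p(3n+1)+3j\bigr)$. Because $v_p(p_1^2\cdots p_k^2)$ is even, we conclude that $v_p(3N+1)$ is odd, so $3N+1$ is not a perfect square, and therefore $\overline{\mathcal{EO}}(N)\equiv 0\pmod 2$ by the first step. This is exactly the stated congruence.

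The only genuinely delicate points are bookkeeping ones: confirming that the reduction modulo $2$ of \eqref{gen-2} collapses to $(q^8;q^8)_\infty$, correctly describing the set of $N$ for which $\overline{\mathcal{EO}}(N)$ is odd, and checking that the prescribed shift is an integer with $v_{p_{k+1}}(3N+1)$ odd. There is no real analytic obstacle here; if one prefers to run the argument through half-integral weight modular forms, the only extra work is identifying the precise space $S_{1/2}(\Gamma_0(576),\chi_{12})$ containing $\eta(24z)$ and the action of $T_{p^2}$ on it, after which the same valuation computation finishes the proof. It is worth noting that the hypothesis $p_i\equiv 2\pmod 3$ is used here only to make the shift an integer (any prime $p_i\ge 5$ with $p_i\not\equiv 0\pmod 3$ would serve equally well), which suggests that the full strength of that congruence condition is really required for the modulo $8$ statement rather than for the present one.
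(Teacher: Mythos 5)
Your proof is correct, and it takes a genuinely different and more elementary route than the paper. The paper reduces \eqref{gen-2} modulo $2$ to $(q;q)_{\infty}^{8}$, passes to the weight-$4$ cusp form $\eta^{8}(3z)\in S_4(\Gamma_0(9))$, and exploits that this is a Hecke eigenform with $\lambda(p)=a(p)=0$ for $p\equiv 2\pmod 3$ to get $a(p^2n+pr)=0$ and $a(p^2n)\equiv a(n)\pmod 2$, which are then iterated over $p_1,\dots,p_{k+1}$. You instead collapse the generating function all the way to $(q^8;q^8)_\infty\pmod 2$ and invoke the pentagonal number theorem to get an exact parity criterion: $\overline{\mathcal{EO}}(N)$ is odd precisely when $3N+1=(6k-1)^2$, i.e.\ when $3N+1$ is the square of an integer prime to $6$; the uniqueness of the representation (roots of $12k^2-4k-N=0$ summing to $1/3$) is the right justification that no cancellation mod $2$ occurs. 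Your computation $3N+1=p_1^2\cdots p_k^2\,p_{k+1}\bigl(p_{k+1}(3n+1)+3j\bigr)$ and the observation that $v_{p_{k+1}}(3N+1)$ is odd then finish the argument without any modular forms input. Two remarks. First, your approach actually proves a stronger statement than Theorem \ref{thm1}: as you note, the hypothesis $p_i\equiv 2\pmod 3$ is needed only for integrality of the shift, and since $p_i^2\equiv 1\pmod 3$ for every prime $p_i\ge 5$, the conclusion holds for arbitrary primes $p_i\ge 5$; the paper's eigenform argument genuinely needs $p\equiv 2\pmod 3$ to force $\lambda(p)=0$, and its value lies mainly in running in parallel with the mod $8$ argument of Theorem \ref{thm2}, where no such elementary collapse is available. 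Second, your side remark identifying $\sum_n\overline{\mathcal{EO}}(n)q^{3n+1}$ with $\eta(24z)\pmod 2$ is consistent but not needed; the pentagonal-number computation already does all the work.
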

Let $p\geq 5$ be a prime such that $p\equiv 2\pmod{3}$. By taking all the primes $p_1, p_2, \ldots, p_{k+1}$ to be equal to the same prime $p$ in Theorem \ref{thm1}, 
we obtain the following infinite family of congruences for $\overline{\mathcal{EO}}(n)$:
	\begin{align*} 
	\overline{\mathcal{EO}}\left( p^{2(k+1)}n + p^{2k+1}j + \frac{p^{2(k+1)}-1}{3}\right) \equiv 0 \pmod 2,
	\end{align*}
	where  $j \not\equiv 0 \pmod p$. In particular, for all $n\geq 0$ and $j\not\equiv 0\pmod{5}$, we have
	\begin{align*} 
	\overline{\mathcal{EO}}\left(25n + 5j + 8\right) \equiv 0 \pmod{2}.
	\end{align*}
\begin{thm}\label{thm2}
	Let $k, n$ be nonnegative integers. For each $i$ with $1\leq i \leq k+1$, if $p_i \equiv 1 \pmod{24}$ is prime such that $\overline{\mathcal{EO}}\left(\frac{19p_i-1}{3}\right) \equiv 0 \pmod{8}$, 
	then for any integer $j \not\equiv 0 \pmod{p_{k+1}}$
	\begin{align*}	
	\overline{\mathcal{EO}}\left(8p_1^2\dots p_{k+1}^2 n + \frac{p_1^2\dots p_{k}^2 p_{k+1}(24j+19p_{k+1})-1}{3}\right) \equiv 0 \pmod 8.
	\end{align*}
\end{thm}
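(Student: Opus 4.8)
The plan is to mimic the structure of the proof of Theorem~\ref{thm1}, but working modulo $8$ rather than modulo $2$ and invoking a different modular form identity. First I would seek a ``nice'' power of the generating function that has a useful modular interpretation. Starting from \eqref{gen-2}, one checks that
\begin{align*}
\sum_{n=0}^{\infty}\overline{\mathcal{EO}}(n)q^n \pmod{8}
\end{align*}
can be related, after replacing $q$ by a suitable power of $q$ and multiplying by an appropriate eta-quotient correction factor, to a weight-$1$ (or small half-integral weight) cusp form $f(z)=\sum a(n)q^n$ that is a Hecke eigenform on $\Gamma_0(M)$ with character. The shift by $\frac{19p_i-1}{3}$ and the modulus $24$ in the hypothesis and conclusion strongly suggest that the relevant object lives in the variable $q\mapsto q^{24}$ with $q=e^{2\pi i z}$, so that $8\cdot 24\, n + (19 p_i - 1)$ etc.\ becomes the natural arithmetic progression; the exponent $19$ should be exactly the value that makes $24n+19$ land in the support of $f$. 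I would pin down $f$ as an eta-quotient built from $\eta(4z)^3/\eta(2z)^2$ composed with $z\mapsto 12z$ (or similar), verify it is a holomorphic cusp form of the claimed level and weight by the standard Ligozat/valence criterion, and record its nebentypus.

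Next I would use the Hecke action. Since $f$ is an eigenform with eigenvalue $\lambda_p$ for a prime $p$ coprime to the level, the coefficients satisfy the recurrence $a(pn) = \lambda_p a(n) - \chi(p) p^{w-1} a(n/p)$, where $w$ is the weight; the hypothesis $\overline{\mathcal{EO}}((19p-1)/3)\equiv 0\pmod 8$ translates, via the dictionary established in the first step, to $a(p)\equiv 0\pmod 8$ (after matching the index $24\cdot\frac{19p-1}{3}+19 = 19p$ with the coefficient $a(19p)$, using $\gcd(19,p)=1$, an eigenvalue factorization $a(19p)=a(19)a(p)$, and the fact that $p\equiv 1\pmod{24}$ forces $\chi(p)=1$). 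Equivalently, $\lambda_p\equiv a(p)\equiv 0\pmod 8$. Then for $n$ with $p\nmid n$ one gets $a(pn)\equiv 0\pmod 8$, and iterating the Hecke recurrence at a prime $p$ with $\lambda_p\equiv 0$ gives $a(p^2 n)\equiv -\chi(p)p^{w-1}a(n)\pmod 8$ whenever $p\nmid n$; more precisely $a(p^{2}m + p j)\equiv 0\pmod 8$ for $j\not\equiv 0\pmod p$, exactly as in Theorem~\ref{thm1}. Unwinding the substitution $q\to q^{24}$ turns $a(p^2 m + pj)$ with the base shift $19$ into $\overline{\mathcal{EO}}\bigl(8p^2 n + \frac{p(24j+19p)-1}{3}\bigr)$.

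For the full family I would proceed by induction on $k$, precisely as the multi-prime version of Theorem~\ref{thm1} presumably does. The key point is that applying the $U_{p_{k+1}}$ and $V_{p_{k+1}}$ operators (or the Hecke operator $T_{p_{k+1}}$) to the level-raising form produces, modulo $8$, an eigenform of the same shape at level multiplied by $p_{k+1}^2$, whose coefficients along the progression
$$8p_1^2\cdots p_k^2\, \ell + \frac{p_1^2\cdots p_k^2 p_{k+1}(24 j + 19 p_{k+1}) - 1}{3}$$
are controlled by the inductive hypothesis. At each stage we use that each $p_i\equiv 1\pmod{24}$ keeps the nebentypus trivial at $p_i$ and ensures the shift $19 p_i$ stays in the right residue class mod $24$, and that $\lambda_{p_i}\equiv 0\pmod 8$ by the standing hypothesis on $\overline{\mathcal{EO}}\bigl(\frac{19p_i-1}{3}\bigr)$. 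I expect the main obstacle to be the first step: exhibiting the correct eta-quotient whose $q$-expansion is congruent mod $8$ to the $q^{24}$-rescaled generating function \eqref{gen-2}, and rigorously checking holomorphy and cuspidality at every cusp of $\Gamma_0(M)$. The acknowledgement to Professor Ahlgren ``for help in finding a proof of an important step'' almost certainly refers to exactly this identification, which is the delicate, non-routine part; once the modular form is in hand, the Hecke-eigenform bookkeeping and the induction are routine.
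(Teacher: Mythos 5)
Your overall strategy (identify an eta-quotient Hecke eigenform, translate the hypothesis into divisibility of the eigenvalue, iterate the Hecke recurrence, induct on $k$) is the right flavor, but there is a concrete gap at the very first step that propagates through the rest. The paper does not work with the full generating function modulo $8$; it first applies the $2$-dissection of $1/(q;q)_{\infty}^2$ twice to isolate
\begin{align*}
\sum_{n=0}^{\infty}\overline{\mathcal{EO}}(8n+6)q^{n}\equiv 4\,\frac{(q^4;q^4)_{\infty}^{5}}{(q;q)_{\infty}}\pmod{8},
\end{align*}
i.e.\ $\sum\overline{\mathcal{EO}}(8n+6)q^{24n+19}\equiv 4\,\eta(96z)^5/\eta(24z)\pmod 8$. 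That explicit factor of $4$ is essential: it is what reduces the mod~$8$ claim about $\overline{\mathcal{EO}}$ to a mod~$2$ claim about the coefficients $a(n)$ of the weight-$2$ cusp form $\eta(96z)^5/\eta(24z)\in S_2(\Gamma_0(2304))$, and it is why the progression in the theorem begins with $8p_1^2\cdots p_{k+1}^2 n$. Your plan instead asks for $\lambda_p\equiv a(p)\equiv 0\pmod 8$, which the hypothesis does not supply: under the correct dictionary the assumption $\overline{\mathcal{EO}}\bigl(\frac{19p-1}{3}\bigr)\equiv 0\pmod 8$ only yields $a(19p)\equiv 0\pmod 2$ (note $\frac{19p-1}{3}=8\cdot 19\frac{p-1}{24}+6$, so the relevant coefficient is $a(19p)$ with $a(19p)=\lambda(p)a(19)=\lambda(p)$). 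Relatedly, your index computation ``$24\cdot\frac{19p-1}{3}+19=19p$'' is false (the left side is $152p+11$); the correct matching goes through the substitution $n\mapsto 19\frac{p-1}{24}$ in $24n+19$, which again requires having first passed to the subsequence $\overline{\mathcal{EO}}(8n+6)$.

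Two smaller corrections: the relevant form is the weight-$2$ eta-quotient $\eta(96z)^5/\eta(24z)$, not a weight-$1$ or half-integral-weight object built from $\eta(48z)^3/\eta(24z)^2$; and the step credited to Ahlgren is not the identification of the eta-quotient but the proof that $\eta(96z)^5/\eta(24z)$ is an eigenform of $T_p$ for $p\equiv 1\pmod{24}$ (via the fact that the four forms $F_1,F_7,F_{13},F_{19}$ are supported on disjoint residue classes mod $24$ and a linear combination of them is a simultaneous eigenform, K\"ohler's identity (13.84)). Once these points are fixed, the remaining Hecke bookkeeping and the induction over $p_1,\dots,p_{k+1}$ proceed essentially as you describe, all carried out modulo $2$ on the $a(n)$ and then multiplied back by $4$.
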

Let $p$ be a prime such that $p \equiv 1 \pmod {24}$ and $\overline{\mathcal{EO}}\left(\frac{19p-1}{3}\right) \equiv 0 \pmod{8}$. 
By taking all the primes $p_1, p_2, \ldots, p_{k+1}$ to be equal to the same prime $p$ in Theorem \ref{thm2}, 
we obtain the following infinite family of congruences for $\overline{\mathcal{EO}}(n)$: 
	\begin{align*} 
	\overline{\mathcal{EO}}\left( 8p^{2(k+1)}n + 8p^{2k+1}j + \frac{19p^{2(k+1)}-1}{3}\right) \equiv 0 \pmod 8,
	\end{align*}
	where  $j \not\equiv 0 \pmod{p}$. In particular, if we choose $p = 1009$, then $1009\equiv 1 \pmod{24}$ and $\frac{19\times 1009-1}{3}=6390$. 
	Using \textit{Mathematica} we verify that $\overline{\mathcal{EO}}\left( 6390\right) \equiv 0 \pmod 8$. Thus, for all $n\geq 0$ and $j \not\equiv 0 \pmod{1009}$, we have
	\begin{align*} 
	\overline{\mathcal{EO}}\left(8144648n +  8072 j + 6447846\right) \equiv 0 \pmod{8}.
	\end{align*}
	\par
In \cite{andrews_2018}, Andrews proved that, for all $n\geq 0$ 
\begin{align}\label{andrews-cong}
\overline{\mathcal{EO}}(10n+8) \equiv 0\pmod{5}.
\end{align} 
In this article, we prove that the congruence \eqref{andrews-cong} is also true modulo $4$ if $n\not \equiv 0 \pmod{5}$. To be specific, we prove the following result. 
\begin{thm} \label{thm5} Let $t\in \{1, 2, 3, 4\}$. Then for all $n\geq 0$ we have
	\begin{align*} 
	\overline{\mathcal{EO}}(10(5n+t)+8) \equiv~0\pmod{20}.
	\end{align*}
\end{thm}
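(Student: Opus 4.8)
The plan is to split the modulus as $20=4\cdot 5$. The factor $5$ is already handled: since $10(5n+t)+8$ has the shape $10m+8$, Andrews' congruence \eqref{andrews-cong} gives $\overline{\mathcal{EO}}(10(5n+t)+8)\equiv 0\pmod 5$. So by the Chinese Remainder Theorem it suffices to prove $\overline{\mathcal{EO}}(10(5n+t)+8)\equiv 0\pmod 4$ for $t\in\{1,2,3,4\}$. Because the right-hand side of \eqref{gen-2} is a power series in $q^2$, set $g(q):=\sum_{n\ge 0}\overline{\mathcal{EO}}(2n)q^n=\dfrac{(q^2;q^2)_\infty^3}{(q;q)_\infty^2}$ and write $[q^N]g(q)$ for the coefficient of $q^N$. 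Then $\overline{\mathcal{EO}}(10(5n+t)+8)=[q^{25n+5t+4}]g(q)$, and as $(n,t)$ runs over $\mathbb Z_{\ge 0}\times\{1,2,3,4\}$ the exponent $N:=25n+5t+4$ runs over exactly the integers with $N\equiv 4\pmod 5$ and $N\not\equiv 4\pmod{25}$. So the goal is to show $[q^N]g(q)\equiv 0\pmod 4$ for all such $N$.

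First I would reduce $g(q)$ modulo $4$ to a theta-type series. From $(1-x)^4\equiv(1-x^2)^2\pmod 4$ one gets $(q;q)_\infty^4\equiv(q^2;q^2)_\infty^2\pmod 4$, hence $\dfrac{1}{(q;q)_\infty^2}\equiv\dfrac{(q;q)_\infty^2}{(q^2;q^2)_\infty^2}\pmod 4$ and so $g(q)\equiv(q;q)_\infty^2(q^2;q^2)_\infty\pmod 4$. Using Gauss's identity $(q;q)_\infty^2=(q^2;q^2)_\infty\,\varphi(-q)$ with $\varphi(q):=\sum_{n\in\mathbb Z}q^{n^2}$, together with $\varphi(-q)\equiv 1+2\sum_{n\ge 1}q^{n^2}\pmod 4$ and $(q^2;q^2)_\infty^2\equiv(q^4;q^4)_\infty\pmod 2$, one more application of the same idea yields
\begin{align*}
g(q)\equiv(q^4;q^4)_\infty\Bigl(1+2\sum_{n\ge 1}q^{n^2}+2\sum_{n\ge 1}q^{2n^2}\Bigr)\equiv(q^4;q^4)_\infty\,\varphi(q)\,\varphi(q^2)\pmod 4 .
\end{align*}

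Next I would extract the coefficients. Let $r(m)=\#\{(a,b)\in\mathbb Z^2:a^2+2b^2=m\}$. By Euler's pentagonal number theorem the coefficient of $q^N$ in $(q^4;q^4)_\infty\varphi(q)\varphi(q^2)$ is $\sum_{c\in\mathbb Z}(-1)^c r\bigl(N-2c(3c-1)\bigr)$, so $[q^N]g(q)\equiv\sum_{c\in\mathbb Z}(-1)^c r\bigl(N-2c(3c-1)\bigr)\pmod 4$. The classical formula $r(m)=2\sum_{d\mid m}\left(\frac{-8}{d}\right)$ (for $m\ge 1$), with $r(0)=1$, shows $r(m)\equiv 2\pmod 4$ exactly when the odd part of $m$ is a perfect square --- equivalently, when $m$ is a positive square or twice a positive square --- and $r(m)\equiv 0\pmod 4$ otherwise. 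A short calculation modulo $5$ and $25$, using that $2c(3c-1)\equiv 4\pmod 5$ forces $c\equiv 1\pmod 5$ and hence $2c(3c-1)\equiv 4\pmod{25}$, shows that when $N\equiv 4\pmod 5$ and $N\not\equiv 4\pmod{25}$ there is no $c$ with $N=2c(3c-1)$ and no $c$ with $N-2c(3c-1)$ twice a square. Hence for such $N$ only the terms with $N-2c(3c-1)$ a positive square survive modulo $4$, and since $2(-1)^c\equiv 2\pmod 4$,
\begin{align*}
[q^N]g(q)\equiv 2\cdot\#\bigl\{(c,s)\in\mathbb Z\times\mathbb Z_{\ge 1}:\ 2c(3c-1)+s^2=N\bigr\}\pmod 4 ,
\end{align*}
so it remains to show this representation count is even.

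For the last step, completing the square rewrites $2c(3c-1)+s^2=N$ as $(6c-1)^2+6s^2=6N+1$. Since $N\equiv 4\pmod 5$ and $N\not\equiv 4\pmod{25}$, write $6N+1=5M$ with $5\nmid M$; one checks directly that $M$ is odd with $M\equiv 2\pmod 3$. Tracking signs (exactly one of $\pm(6c-1)$ is $\equiv 5\pmod 6$), the count above equals $\tfrac14 R_{x^2+6y^2}(5M)$, where $R_Q(n)$ is the number of integral representations of $n$ by $Q$. Now the genus theory of binary quadratic forms of discriminant $-24$ enters: the class group is $\mathbb Z/2\mathbb Z$, the two reduced forms are $x^2+6y^2$ and $2x^2+3y^2$, and $5=2\cdot 1^2+3\cdot 1^2$ lies in the nontrivial class, so multiplication by a prime above $5$ interchanges the two classes and gives $R_{x^2+6y^2}(5M)=2R_{2x^2+3y^2}(M)$; thus the count equals $\tfrac12 R_{2x^2+3y^2}(M)$. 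Finally, because $M$ is odd and not divisible by $3$, every representation $M=2x^2+3y^2$ has $xy\ne 0$, so $\{\pm 1\}\times\{\pm 1\}$ acts freely on these representations and $4\mid R_{2x^2+3y^2}(M)$; hence the count is even and $[q^N]g(q)\equiv 0\pmod 4$. Combined with \eqref{andrews-cong}, this proves $\overline{\mathcal{EO}}(10(5n+t)+8)\equiv 0\pmod{20}$. The main obstacle I anticipate is the identity $R_{x^2+6y^2}(5M)=2R_{2x^2+3y^2}(M)$ --- the genuine arithmetic input, coming from the weight-one CM form attached to $\mathbb Q(\sqrt{-6})$ (equivalently, from composition of forms) --- whereas the chain of eta-quotient congruences in the second paragraph is routine but has to be carried out carefully.
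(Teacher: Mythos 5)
Your proposal is correct, and after the shared first step it follows a genuinely different route from the paper's. Both arguments dispose of the factor $5$ via Andrews' congruence \eqref{andrews-cong} and reduce the generating function modulo $4$ to $(q^2;q^2)_\infty^2(q^4;q^4)_\infty$ (your $g(q)\equiv (q;q)_\infty^2(q^2;q^2)_\infty \pmod 4$ is the same identity after replacing $q^2$ by $q$). From there the paper invokes Radu's algorithm (Lemmas \ref{lem1} and \ref{lem2}) with the tuple $(m,M,N,r,t)=(50,8,10,(0,2,1,0),18)$: membership in $\Delta^{*}$ is checked, the bound $\lfloor\nu\rfloor=1$ is computed, and the finitely many initial coefficients are verified by computer. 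You instead push the eta-quotient reduction one step further, to $(q^4;q^4)_\infty\varphi(q)\varphi(q^2)\pmod 4$, expand the coefficient of $q^N$ as a pentagonal-number-indexed alternating sum of representation numbers $r(m)$ for $x^2+2y^2$, use the fact that $2c(3c-1)\equiv 4\pmod 5$ forces $2c(3c-1)\equiv 4\pmod{25}$ to rule out the $m=0$ and twice-a-square terms when $N\equiv 4\pmod 5$ but $N\not\equiv 4\pmod{25}$, and then show the surviving count of square terms is even by rewriting it as $\tfrac14 R_{x^2+6y^2}(5M)$ with $6N+1=5M$ and appealing to the class group of discriminant $-24$; I checked the key identity $R_{x^2+6y^2}(5M)=2R_{2x^2+3y^2}(M)$ both conceptually (the primes above $5$ are non-principal in $\mathbb{Z}[\sqrt{-6}]$, and there are two of them) and on examples such as $M=11,29$, and the final divisibility $4\mid R_{2x^2+3y^2}(M)$ for $M$ coprime to $6$ is immediate. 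The trade-off: the paper's route is short, mechanical, and uniform, but rests on computer verification and the modular-forms machinery behind Radu's criterion; yours is longer and needs care at several points (the mod-$25$ analysis, the sign bookkeeping in passing to $(6c-1)^2+6s^2=6N+1$), but it is entirely computer-free, and it makes visible exactly where the hypothesis $t\neq 0$ enters, namely that for $N\equiv 4\pmod{25}$ the excluded terms with $25\mid N-2c(3c-1)$ reappear.
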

We note that Theorem \ref{thm5} is not true if $t=0$. For example, $\overline{\mathcal{EO}}(8)$ is not divisible by $4$.
\par 
For a nonnegative integer $n$, let $p(n)$ denote the number of partitions of $n$. In \cite{ono3}, Ono proved that there are infinitely many integers $N$  in every arithmetic progression for which $p(N)$
is even; and that there are infinitely many integers $M$ in every arithmetic progression for which $p(M)$ is odd so long as there is at least one. Ono's result gave an affirmative answer to a well-known conjecture on 
parity of $p(n)$ in an arithmetic progression. In the following theorem, we prove the same for the partition function $\overline{\mathcal{EO}}(n)$. We note that $\overline{\mathcal{EO}}(2n+1)=0$ 
for all $n\geq 0$.
\begin{thm}\label{thm7}
For any arithmetic progression $r \pmod{t}$, there are infinitely many integers $N\equiv r \pmod {t}$ for which  $\overline{\mathcal{EO}}(2N)$ is even. Also,
for any arithmetic progression $r \pmod {t}$, there are infinitely many integers $M\equiv r \pmod {t}$ for which $\overline{\mathcal{EO}}(2M)$ is odd,  provided there is one such $M$. Furthermore, if there
does exist an $M\equiv r \pmod {t}$ for which $\overline{\mathcal{EO}}(2M)$ is odd, then the smallest such $M$ is less than $$\frac{2^{9+j}3^7t^6}{d^2}\prod_{p\mid 6t}\left(1-\frac{1}{p^2}\right) - 2^j,$$
where $d = \gcd(12r-1,t)$ and $2^j>\displaystyle\frac{t}{12}.$
\end{thm}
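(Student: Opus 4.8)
The plan is to reduce the parity of $\overline{\mathcal{EO}}(2N)$ to a statement about perfect squares and then treat the three assertions separately, the quantitative one being the only place where modular forms are genuinely needed. Since the right-hand side of \eqref{gen-2} is a power series in $q^{2}$ (which is also why $\overline{\mathcal{EO}}(2n+1)=0$), extracting the even-index terms and replacing $q^{2}$ by $q$ gives $\sum_{n\ge 0}\overline{\mathcal{EO}}(2n)q^{n}=(q^{2};q^{2})_{\infty}^{3}/(q;q)_{\infty}^{2}$. Applying $(q;q)_{\infty}^{2}\equiv(q^{2};q^{2})_{\infty}\pmod 2$ first to the denominator and then to $(q^{2};q^{2})_{\infty}^{2}$ yields
\[
\sum_{n\ge 0}\overline{\mathcal{EO}}(2n)q^{n}\equiv (q^{4};q^{4})_{\infty}\pmod 2 .
\]
By Euler's pentagonal number theorem $(q^{4};q^{4})_{\infty}=\sum_{k\in\mathbb Z}(-1)^{k}q^{2k(3k-1)}$, so substituting $q\mapsto q^{6}$ and multiplying by $q$ gives
\[
\sum_{n\ge 0}\overline{\mathcal{EO}}(2n)\,q^{6n+1}\equiv q\,(q^{24};q^{24})_{\infty}=\sum_{k\in\mathbb Z}(-1)^{k}q^{(6k-1)^{2}}\pmod 2,
\]
the last series being the weight-$\tfrac12$ Hecke eigenform $\eta(24z)$. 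Hence $\overline{\mathcal{EO}}(2N)$ is odd if and only if $6N+1$ is a perfect square.

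Given this, the first two assertions are elementary. The number of $N\le X$ with $N\equiv r\pmod t$ and $6N+1$ \emph{not} a perfect square is at least $X/t-1-\sqrt{6X+1}$, which tends to infinity; so $\overline{\mathcal{EO}}(2N)$ is even for infinitely many $N\equiv r\pmod t$. For the odd case, suppose $\overline{\mathcal{EO}}(2M_{0})$ is odd and write $6M_{0}+1=m_{0}^{2}$. For each $\ell\ge 1$ put
\[
M_{\ell}=\frac{(m_{0}+6\ell t)^{2}-1}{6}=M_{0}+2\ell t\,(m_{0}+3\ell t).
\]
Then $M_{\ell}\equiv M_{0}\equiv r\pmod t$, the $M_{\ell}$ are strictly increasing, and $6M_{\ell}+1=(m_{0}+6\ell t)^{2}$ is a perfect square; hence $\overline{\mathcal{EO}}(2M_{\ell})$ is odd, and there are infinitely many $M\equiv r\pmod t$ with $\overline{\mathcal{EO}}(2M)$ odd.

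For the explicit bound, observe that an $M\equiv r\pmod t$ with $\overline{\mathcal{EO}}(2M)$ odd exists precisely when the congruence $m^{2}\equiv 6r+1\pmod{6t}$ is solvable, and then the admissible $M$ are exactly the integers $(m^{2}-1)/6$ with $m$ running over the ($6t$-periodic) solution set; taking the least positive solution already yields a least admissible $M$ less than $6t^{2}$. To recover instead the bound stated in the theorem — which parallels Ono's bound for $p(n)$ in \cite{ono3} — I would restrict $\eta(24z)$ to the progression $n\equiv r\pmod t$ by forming the appropriate linear combination of the translates $\eta\!\left(24\bigl(z+\tfrac{v}{6t}\bigr)\right)$, $0\le v<6t$. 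This twist is a weight-$\tfrac12$ modular form on a congruence subgroup of level polynomially bounded in $t$; its reduction modulo $2$ is nonzero exactly when such an $M$ exists, and then a Sturm-type bound (valid in half-integral weight) forces its first nonvanishing coefficient — hence $6M+1$, and so $M$ — to lie below the displayed index, with the correction term $-2^{j}$, the condition $2^{j}>t/12$, and the divisor $d=\gcd(12r-1,t)$ entering exactly as in \cite{ono3}.

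The mod-$2$ reduction and the pentagonal number theorem are routine, and the second paragraph settles the qualitative claims outright — indeed it produces a bound sharper than the stated one. The only genuinely delicate point is the last step: identifying the precise level, weight and nebentypus of the twisted form, computing the index $[\mathrm{SL}_2(\mathbb Z):\Gamma]$, and bookkeeping the powers of $2$ and $3$ together with the $d$-dependence so as to land on exactly $\dfrac{2^{9+j}3^{7}t^{6}}{d^{2}}\prod_{p\mid 6t}\bigl(1-\tfrac{1}{p^{2}}\bigr)-2^{j}$. That constant-chasing, not any conceptual difficulty, is where the work lies.
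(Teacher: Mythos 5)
Your proof is correct, but it takes a genuinely different and more elementary route than the paper. The paper follows Ono's method as packaged by Jameson and Wieczorek: it writes $\sum_{n\ge0}\overline{\mathcal{EO}}(2n)q^{6n+1}=\eta(12z)^3/\eta(6z)^2\equiv\eta(12z)^4/\eta(6z)^4\pmod 2$, views this as a weight-$0$ weakly holomorphic form on $\Gamma_0(72)$, multiplies by $\Delta^{2^j}(6tz)$ to land in $S_{12\cdot 2^j}(\Gamma_0(72t))$ (which is exactly where the hypothesis $2^j>t/12$ enters), and then quotes Theorems 5 and 6 of Jameson--Wieczorek to obtain all three assertions, including the displayed constant. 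You instead push the mod-$2$ reduction one step further, to $\sum_{n\ge 0}\overline{\mathcal{EO}}(2n)q^{n}\equiv(q^{4};q^{4})_{\infty}\pmod 2$, and the pentagonal number theorem then yields the exact parity criterion: $\overline{\mathcal{EO}}(2N)$ is odd if and only if $6N+1$ is a perfect square (the exponents $2k(3k-1)$ are pairwise distinct, so no cancellation occurs, and any square $\equiv 1\pmod 6$ is automatically of the form $(6k-1)^2$). This is strictly stronger than the theorem; the evenness and oddness statements reduce to elementary counting, and your family $M_{\ell}=((m_{0}+6\ell t)^{2}-1)/6$ works as claimed. For the quantitative part, your observation that the least admissible $M$ equals $(m_{0}^{2}-1)/6$ for the least positive solution $m_{0}\le 6t$ of $m^{2}\equiv 6r+1\pmod{6t}$, hence is less than $6t^{2}$, already suffices --- but you should close the loop by verifying that $6t^{2}$ lies below the displayed quantity. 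This is immediate: since $d\le t$ and $\prod_{p\mid 6t}\left(1-p^{-2}\right)>6/\pi^{2}$, the displayed bound exceeds $2^{9}3^{7}(6/\pi^{2})t^{4}\cdot 2^{j}-2^{j}>6t^{2}$. With that one line added, your final paragraph about twisting $\eta(24z)$ and chasing the constant is logically unnecessary and should be deleted, which is just as well since it is only a sketch. What the paper's machinery buys is generality (it applies even when the mod-$2$ reduction is not an explicitly lacunary series) and it produces the constant in precisely the stated shape; what your argument buys is a complete determination of the parity of $\overline{\mathcal{EO}}(2N)$ and a far sharper bound on the smallest odd value.
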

A well-known conjecture of Parkin and Shanks \cite{PS} states that the even and odd values of $p(n)$ are equally distributed, that is, 
\begin{align*}
 \lim_{X\to\infty} \frac{\# \left\{0\leq n\leq X: p(n)\equiv r\pmod{2}\right\}}{X}=\frac{1}{2},
\end{align*}
where $r\in \{0, 1\}$. Little is known regarding this conjecture. In the following theorem we prove that $\overline{\mathcal{EO}}(2n)$ is almost always even. 
\begin{thm}\label{thm6} Let $n\geq 0$. Then 
$\overline{\mathcal{EO}}(8n+6)$ is almost always divisible by $8$, namely,
	\begin{align*}
	\lim_{X\to\infty} \frac{\# \left\{0\leq n\leq X: \overline{\mathcal{EO}}(8n+6)\equiv 0\pmod{8}\right\}}{X}=1.
	\end{align*}
\end{thm}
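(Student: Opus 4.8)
The plan is to reduce the generating function \eqref{gen-2} modulo $8$, isolate the arithmetic progression $8n+6$, and then invoke Serre's density theorem for the Fourier coefficients of a holomorphic modular form. I would begin with the congruence $(q;q)_\infty^{2^j}\equiv(q^2;q^2)_\infty^{2^{j-1}}\pmod{2^j}$, obtained by squaring the case $j=1$ inductively. Since every exponent occurring in \eqref{gen-2} is even, replacing $q^2$ by $q$ and applying this with $j=3$ gives
\begin{align*}
\sum_{m\ge 0}\overline{\mathcal{EO}}(2m)q^m=\frac{(q^2;q^2)_\infty^3}{(q;q)_\infty^2}=\frac{(q^2;q^2)_\infty^3(q;q)_\infty^6}{(q;q)_\infty^8}\equiv\frac{(q;q)_\infty^6}{(q^2;q^2)_\infty}\pmod 8 ,
\end{align*}
and, by the Jacobi triple product identity $\varphi(-q):=\sum_{n\in\mathbb{Z}}(-1)^nq^{n^2}=(q;q)_\infty^2/(q^2;q^2)_\infty$, the right side equals $(q^2;q^2)_\infty^2\,\varphi(-q)^3=(q^4;q^4)_\infty\,\varphi(-q^2)\,\varphi(-q)^3$.

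Next I would extract the part supported on exponents $\equiv 3\pmod 4$, which records $\overline{\mathcal{EO}}(8n+6)$ with $m=4n+3$. Writing $\varphi(-q)^3=\sum_{a,b,c\in\mathbb{Z}}(-1)^{a+b+c}q^{a^2+b^2+c^2}$ and $\varphi(-q^2)=\sum_{j\in\mathbb{Z}}(-1)^jq^{2j^2}$, a bookkeeping of exponents modulo $4$ leaves only two possibilities: $(a,b,c)$ all odd with $j$ even, or exactly one of $a,b,c$ odd with $j$ odd. In the first case the eight sign choices $(\pm a,\pm b,\pm c)$ make the corresponding coefficient divisible by $8$, so that case disappears modulo $8$; in the second case the reflections $j\mapsto\pm j$ and (odd variable)$\mapsto-$(odd variable), together with the three choices of which variable is odd, pull out an overall factor $12\equiv 4\pmod 8$. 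Collecting the surviving terms, using $\sum_{a\ge 1,\ a\text{ odd}}q^{a^2}=q\psi(q^8)$ and $\sum_{j\ge 1,\ j\text{ odd}}q^{2j^2}=q^2\psi(q^{16})$ with $\psi(q)=\sum_{k\ge 0}q^{k(k+1)/2}$, and re-indexing the progression, one obtains
\begin{align*}
\sum_{n\ge 0}\overline{\mathcal{EO}}(8n+6)q^n\equiv 4\,(q;q)_\infty\,\varphi(q)^2\,\psi(q^2)\,\psi(q^4)\pmod 8;
\end{align*}
in particular $\overline{\mathcal{EO}}(8n+6)\equiv 0\pmod 4$ for every $n$. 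Reducing the right-hand side modulo $2$ via $\varphi(q)\equiv 1$, the Jacobi identity $(q;q)_\infty^3=\sum_{k\ge 0}(-1)^k(2k+1)q^{k(k+1)/2}\equiv\psi(q)$ (the $2k+1$ being odd), and $(q;q)_\infty^{2^j}\equiv(q^{2^j};q^{2^j})_\infty$, every factor collapses to a power of $(q;q)_\infty$ and I get
\begin{align*}
\sum_{n\ge 0}\overline{\mathcal{EO}}(8n+6)q^n\equiv 4\,(q;q)_\infty^{19}\pmod 8 .
\end{align*}

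To finish, note that $\eta(24\tau)^{19}=\sum_{n\ge 0}c(n)q^{24n+19}$, where $c(n)$ is the coefficient of $q^n$ in $(q;q)_\infty^{19}$, is a holomorphic cusp form of half-integral weight $19/2$ on a suitable $\Gamma_0(N)$ with a quadratic character. Multiplying it by the unary theta series $\theta(\tau)=\sum_{n\in\mathbb{Z}}q^{n^2}\equiv 1\pmod 2$ produces a holomorphic cusp form of integral weight $10$ with integral Fourier coefficients that is congruent to $\eta(24\tau)^{19}$ modulo $2$. By a theorem of Serre on the density of divisible coefficients of an integral-weight holomorphic modular form, the set of $n$ for which this form has its $n$th Fourier coefficient $\equiv 0\pmod 2$ has density one; hence $c(n)$, and with it $\overline{\mathcal{EO}}(8n+6)/4$, is even for almost all $n$, which is precisely the assertion of the theorem.

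The main obstacle is the middle step: carrying out the two successive $2$-adic dissections explicitly enough to recognise $\tfrac{1}{4}\sum_n\overline{\mathcal{EO}}(8n+6)q^n\bmod 2$ as the reduction of the single eta-power $(q;q)_\infty^{19}$. This is a finite but intricate computation, the crucial simplifications being $\varphi(-q)\equiv 1$, $(q;q)_\infty^3\equiv\psi(q)$, and the freshman's-dream congruences $(q;q)_\infty^{2^j}\equiv(q^{2^j};q^{2^j})_\infty$ modulo $2$, which together let every auxiliary theta factor be absorbed into a power of $(q;q)_\infty$. A secondary, routine point is the passage from half-integral to integral weight by multiplying by $\theta\equiv 1\pmod 2$, needed only because Serre's density theorem is stated for integral weight.
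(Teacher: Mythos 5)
Your argument is correct, and it reaches the same two waypoints as the paper --- the congruence $\sum_{n\ge 0}\overline{\mathcal{EO}}(8n+6)q^n\equiv 4\,(q;q)_\infty^{19}\pmod 8$ (the paper's exact dissection gives $4\,(q^4;q^4)_\infty^5/(q;q)_\infty$, which reduces to the same thing mod $2$ after the factor $4$ is pulled out) and then Serre's density theorem --- but by genuinely different means at both stages. For the first stage the paper applies Ramanujan's explicit $2$-dissection of $1/(q;q)_\infty^2$ (Berndt, Entry 25) twice to obtain an exact identity for $\sum\overline{\mathcal{EO}}(8n+6)q^n$, whereas you use the binomial congruence $(q;q)_\infty^8\equiv(q^2;q^2)_\infty^4\pmod 8$ together with a parity bookkeeping of the exponents of $(q^4;q^4)_\infty\varphi(-q^2)\varphi(-q)^3$; your sign-orbit count ($-8$ in the all-odd case, $12\equiv 4$ in the one-odd case) checks out, so this is a legitimate substitute for the tabulated dissection, at the cost of yielding only a congruence rather than an identity (which is all that is needed). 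For the second stage the paper works with the weight-$2$ eta-quotient $\eta^5(96z)/\eta(24z)$, which is only weakly holomorphic, and multiplies by $\bigl(\eta^2(24z)/\eta(48z)\bigr)^2\equiv 1\pmod 4$ to clear the poles at the cusps, landing in $S_3(\Gamma_0(2304),(\tfrac{-4}{\bullet}))$; you instead work with $\eta(24\tau)^{19}$, which is automatically cuspidal but of half-integral weight, and multiply by $\theta(\tau)\equiv 1\pmod 2$ to land in integral weight $10$. Both are instances of the ``multiply by something congruent to $1$'' trick, deployed to fix different defects; your endpoint is arguably cleaner since holomorphy at the cusps is free, while the paper's keeps the weight and level small. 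The only point I would ask you to make explicit in a final write-up is the verification that $\eta(24\tau)^{19}\theta(\tau)$ really does lie in $S_{10}(\Gamma_0(N),\chi)$ for a genuine Dirichlet character $\chi$ (the theta-multipliers of the two factors combine to a quadratic character because the total number of half-integral-weight factors, $19+1=20$, is even); as stated this is asserted rather than checked, but it is routine.
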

Recently, Uncu \cite{uncu} has treated a different subset of the partitions enumerated by $\mathcal{EO}(n)$. Also see \cite[p. 435]{andrews_2018}. We denote by $\mathcal{EO}_u(n)$ 
the partition function defined by Uncu, and the generating function is given by
\begin{align}\label{gen2}
\sum_{n=0}^{\infty} \mathcal{EO}_u(n)q^n=\frac{1}{(q^2; q^4)_{\infty}^2}.
\end{align}
For any fixed positive integer $k$, Gordon and Ono \cite{ono2} proved that the number of partitions of $n$ into distinct parts is divisible by $2^k$ for almost 
all $n$. Similar studies are done for some other partition functions, for example see \cite{BR, bringman, lin4, Chiranjit-Barman}.  
In this article, we study divisibility of the partition function $\mathcal{EO}_u(n)$ by $2^k$. To be specific, we prove the following result. 
\begin{thm} \label{thm4} Let $k$ be a positive integer. Then $\mathcal{EO}_u(2n)$ is almost always divisible by $2^k$, namely,
	\begin{align*}
	\lim_{X\to\infty} \frac{\# \left\{0\leq n\leq X: \mathcal{EO}_u(2n)\equiv 0\pmod{2^k}\right\}}{X}=1.
	\end{align*}
\end{thm}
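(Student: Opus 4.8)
The plan is to reduce Theorem~\ref{thm4} to Serre's theorem that a holomorphic cusp form of positive integral weight with integer Fourier coefficients has almost all of its coefficients divisible by any fixed modulus. Since $\mathcal{EO}_u(2n+1)=0$, equation~\eqref{gen2} gives
\[
f(q):=\sum_{n\ge 0}\mathcal{EO}_u(2n)q^n=\frac{1}{(q;q^2)_\infty^2}=\frac{(q^2;q^2)_\infty^2}{(q;q)_\infty^2},
\]
so it suffices to show that almost all coefficients of $f$ are divisible by $2^k$; and since divisibility by $4$ forces divisibility by $2$, we may assume $k\ge 2$. The first step is the elementary $2$-adic congruence $(q;q)_\infty^{2^k}\equiv (q^2;q^2)_\infty^{2^{k-1}}\pmod{2^k}$, obtained by squaring $(q;q)_\infty^2\equiv (q^2;q^2)_\infty\pmod 2$ repeatedly; inverting this (all power series in sight are integral, with unit denominators) and multiplying by $(q;q)_\infty^{2^k-2}(q^2;q^2)_\infty^2$ yields
\[
f(q)\equiv H_k(q):=\frac{(q;q)_\infty^{2^k-2}}{(q^2;q^2)_\infty^{2^{k-1}-2}}\pmod{2^k},
\]
an integral power series having the shape of an eta-quotient with exactly one negative exponent. (Here $H_k$ is the square of the eta-quotient appearing in Gordon and Ono's treatment of partitions into distinct parts, reflecting the factorization $f(q)=\bigl(\sum_n p_{\mathrm d}(n)q^n\bigr)^2$.)

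Next I would promote $H_k$ to a genuine modular form via $q\mapsto q^{24}$. A short computation using $\eta(\delta\tau)=q^{\delta/24}(q^\delta;q^\delta)_\infty$ shows
\[
\Psi_k(\tau):=q^2\,H_k(q^{24})=\frac{\eta(24\tau)^{2^k-2}}{\eta(48\tau)^{2^{k-1}-2}},
\]
a candidate cusp form of weight $2^{k-2}$. Using Ligozat's criterion I would check that $\Psi_k$ is a holomorphic modular form on $\Gamma_0(576)$ with quadratic (indeed trivial, for $k\ge 3$) nebentypus: the conditions $\sum_\delta \delta r_\delta\equiv\sum_\delta (N/\delta)r_\delta\equiv 0\pmod{24}$ and the square condition on $\prod_\delta \delta^{r_\delta}=2^{2^k+2}3^{2^{k-1}}$ reduce to congruences that hold for every $k\ge 2$.

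The main obstacle is to show that $\Psi_k$ in fact vanishes at every cusp of $\Gamma_0(576)$ — hence is a cusp form — despite the negative exponent on $\eta(48\tau)$. By the standard formula for the order of an eta-quotient at the cusp $a/c$ of $\Gamma_0(N)$, this reduces to verifying
\[
2\,\gcd(c,24)^2\,(2^k-2)\ \ge\ \gcd(c,48)^2\,(2^{k-1}-2)\qquad\text{for every }c\mid 576,
\]
which I would dispatch by separating the cases $v_2(c)\le 3$ (where $\gcd(c,48)=\gcd(c,24)$, leaving $3\cdot 2^{k-1}\ge 2$) and $v_2(c)\ge 4$ (where $\gcd(c,48)=2\gcd(c,24)$, leaving $-4\ge -8$); both inequalities are strict, so the order at every cusp is positive. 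Thus $\Psi_k$ is a cusp form of positive integral weight with integer coefficients, and Serre's theorem applies with modulus $2^k$. Finally, almost all coefficients of $\Psi_k$ are divisible by $2^k$; since $\Psi_k(\tau)=q^2 H_k(q^{24})$ and the operations $q\mapsto q^{24}$ and a shift by $q^2$ do not change the density of the support of the non-divisible coefficients, almost all coefficients of $H_k$ — and hence, by $f\equiv H_k\pmod{2^k}$, of $f$ — are divisible by $2^k$, i.e. $\mathcal{EO}_u(2n)\equiv 0\pmod{2^k}$ for almost all $n$, which is Theorem~\ref{thm4}.
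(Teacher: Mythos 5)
Your proposal is correct and follows essentially the same route as the paper: both congruences rest on the identity $(q;q)_\infty^{2}\equiv(q^2;q^2)_\infty\pmod 2$ raised to a power of $2$, which converts the generating function of $\mathcal{EO}_u(2n)$ into an eta-quotient of the family $\eta(24z)^{2^{j}-2}/\eta(48z)^{2^{j-1}-2}$ on $\Gamma_0(576)$ (your $\Psi_{k+1}$ is exactly the paper's $B_k$), after which the cusp conditions are checked and Serre's density theorem is applied. The only differences are cosmetic: you work modulo $2^k$ with a form of weight $2^{k-2}$ where the paper works modulo $2^{k+1}$ with weight $2^{k-1}$, and you additionally verify cuspidality, which Serre's theorem does not require.
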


\section{Preliminaries}
In this section, we recall some definitions and basic facts on modular forms. For more details, see for example \cite{ono, koblitz}. We first define the matrix groups 
\begin{align*}
\text{SL}_2(\mathbb{Z}) & :=\left\{\begin{bmatrix}
a  &  b \\
c  &  d      
\end{bmatrix}: a, b, c, d \in \mathbb{Z}, ad-bc=1
\right\},\\
\Gamma_{\infty} & :=\left\{
\begin{bmatrix}
1  &  n \\
0  &  1      
\end{bmatrix}: n\in \mathbb{Z}  \right\},\\
\Gamma_{0}(N) & :=\left\{
\begin{bmatrix}
a  &  b \\
c  &  d      
\end{bmatrix} \in \text{SL}_2(\mathbb{Z}) : c\equiv 0\pmod N \right\},\\
\Gamma_{1}(N) & :=\left\{
\begin{bmatrix}
a  &  b \\
c  &  d      
\end{bmatrix} \in \Gamma_0(N) : a\equiv d\equiv 1\pmod N \right\}
\end{align*}
and 
\begin{align*}\Gamma(N) & :=\left\{
\begin{bmatrix}
a  &  b \\
c  &  d      
\end{bmatrix} \in \text{SL}_2(\mathbb{Z}) : a\equiv d\equiv 1\pmod N, ~\text{and}~ b\equiv c\equiv 0\pmod N\right\},
\end{align*}
where $N$ is a positive integer. A subgroup $\Gamma$ of $\text{SL}_2(\mathbb{Z})$ is called a congruence subgroup if $\Gamma(N)\subseteq \Gamma$ for some $N$. The smallest $N$ such that $\Gamma(N)\subseteq \Gamma$
is called the level of $\Gamma$. For example, $\Gamma_0(N)$ and $\Gamma_1(N)$ are congruence subgroups of level $N$. The index of $\Gamma_{0}(N)$ in $\text{SL}_2(\mathbb{Z})$ is
\begin{align*}
 [\text{SL}_2(\mathbb{Z}) : \Gamma_0(N)] = N\prod_{p|N}(1+p^{-1}), 
\end{align*}
where $p$ denotes a prime.
\par Let $\mathbb{H}:=\{z\in \mathbb{C}: \text{Im}(z)>0\}$ be the upper half of the complex plane. The group $$\text{GL}_2^{+}(\mathbb{R})=\left\{\begin{bmatrix}
a  &  b \\
c  &  d      
\end{bmatrix}: a, b, c, d\in \mathbb{R}~\text{and}~ad-bc>0\right\}$$ acts on $\mathbb{H}$ by $\begin{bmatrix}
a  &  b \\
c  &  d      
\end{bmatrix} z=\displaystyle \frac{az+b}{cz+d}$.  
We identify $\infty$ with $\displaystyle\frac{1}{0}$ and define $\begin{bmatrix}
a  &  b \\
c  &  d      
\end{bmatrix} \displaystyle\frac{r}{s}=\displaystyle \frac{ar+bs}{cr+ds}$, where $\displaystyle\frac{r}{s}\in \mathbb{Q}\cup\{\infty\}$.
This gives an action of $\text{GL}_2^{+}(\mathbb{R})$ on the extended upper half-plane $\mathbb{H}^{\ast}=\mathbb{H}\cup\mathbb{Q}\cup\{\infty\}$. 
Suppose that $\Gamma$ is a congruence subgroup of $\text{SL}_2(\mathbb{Z})$. A cusp of $\Gamma$ is an equivalence class in $\mathbb{P}^1=\mathbb{Q}\cup\{\infty\}$ under the action of $\Gamma$.
\par The group $\text{GL}_2^{+}(\mathbb{R})$ also acts on functions $f: \mathbb{H}\rightarrow \mathbb{C}$. In particular, suppose that $\gamma=\begin{bmatrix}
a  &  b \\
c  &  d      
\end{bmatrix}\in \text{GL}_2^{+}(\mathbb{R})$. If $f(z)$ is a meromorphic function on $\mathbb{H}$ and $\ell$ is an integer, then define the slash operator $|_{\ell}$ by 
$$(f|_{\ell}\gamma)(z):=(\text{det}~{\gamma})^{\ell/2}(cz+d)^{-\ell}f(\gamma z).$$
\begin{defn}
Let $\Gamma$ be a congruence subgroup of level $N$. A holomorphic function $f: \mathbb{H}\rightarrow \mathbb{C}$ is called a modular form with integer weight $\ell$ on $\Gamma$ if the following hold:
\begin{enumerate}
 \item We have $$f\left(\displaystyle \frac{az+b}{cz+d}\right)=(cz+d)^{\ell}f(z)$$ for all $z\in \mathbb{H}$ and all $\begin{bmatrix}
a  &  b \\
c  &  d      
\end{bmatrix} \in \Gamma$.
\item If $\gamma\in \text{SL}_2(\mathbb{Z})$, then $(f|_{\ell}\gamma)(z)$ has a Fourier expansion of the form $$(f|_{\ell}\gamma)(z)=\displaystyle\sum_{n\geq 0}a_{\gamma}(n)q_N^n,$$
where $q_N:=e^{2\pi iz/N}$. That is, $f$ is holomorphic at all the cusps of $\Gamma$.
\end{enumerate}
\end{defn}
For a positive integer $\ell$, the complex vector space of modular forms of weight $\ell$ with respect to a congruence subgroup $\Gamma$ is denoted by $M_{\ell}(\Gamma)$. A modular form 
$f\in M_{\ell}(\Gamma)$ is called a cusp form if $f$ vanishes at all the cusps of $\Gamma$. The subspace of $M_{\ell}(\Gamma)$ consisting of cusp forms is denoted by $S_{\ell}(\Gamma)$.
\begin{defn}\cite[Definition 1.15]{ono}
	If $\chi$ is a Dirichlet character modulo $N$, then we say that a modular form $f\in M_{\ell}(\Gamma_1(N))$ has Nebentypus character $\chi$ if
	$$f\left( \frac{az+b}{cz+d}\right)=\chi(d)(cz+d)^{\ell}f(z)$$ for all $z\in \mathbb{H}$ and all $\begin{bmatrix}
	a  &  b \\
	c  &  d      
	\end{bmatrix} \in \Gamma_0(N)$. The space of such modular forms is denoted by $M_{\ell}(\Gamma_0(N), \chi)$. The corresponding space of cusp forms  is denoted by $S_{\ell}(\Gamma_0(N),\chi)$.
	If $\chi$ is the trivial character then we write $M_{\ell}(\Gamma_0(N))$ and $S_{\ell}(\Gamma_0(N))$ for short.
\end{defn}
\par 
Recall that Dedekind's eta-function $\eta(z)$ is defined by
\begin{align*}
	\eta(z):=q^{1/24}(q;q)_{\infty}=q^{1/24}\prod_{n=1}^{\infty}(1-q^n),
\end{align*}
where $q:=e^{2\pi iz}$ and $z\in \mathbb{H}$. A function $f(z)$ is called an eta-quotient if it is of the form
\begin{align*}
f(z)=\prod_{\delta\mid N}\eta(\delta z)^{r_\delta},
\end{align*}
where $N$ is a positive integer and $r_{\delta}$ is an integer. 
\par 
We now recall two theorems from \cite[p. 18]{ono} which will be used to prove our result.
\begin{thm}\cite[Theorem 1.64 and Theorem 1.65]{ono}\label{thm_ono1} If $f(z)=\prod_{\delta\mid N}\eta(\delta z)^{r_\delta}$ 
is an eta-quotient such that $\ell=\frac{1}{2}\sum_{\delta\mid N}r_{\delta}\in \mathbb{Z}$, 
	$$\sum_{\delta\mid N} \delta r_{\delta}\equiv 0 \pmod{24}$$ and
	$$\sum_{\delta\mid N} \frac{N}{\delta}r_{\delta}\equiv 0 \pmod{24},$$
	then $f(z)$ satisfies $$f\left( \frac{az+b}{cz+d}\right)=\chi(d)(cz+d)^{\ell}f(z)$$
	for every  $\begin{bmatrix}
		a  &  b \\
		c  &  d      
	\end{bmatrix} \in \Gamma_0(N)$. Here the character $\chi$ is defined by $\chi(d):=\left(\frac{(-1)^{\ell} \prod_{\delta\mid N}\delta^{r_{\delta}}}{d}\right)$. 
	 In addition, if $c, d,$ and $N$ are positive integers with $d\mid N$ and $\gcd(c, d)=1$, then the order of vanishing of $f(z)$ at the cusp $\frac{c}{d}$ 
	is $\frac{N}{24}\sum_{\delta\mid N}\frac{\gcd(d,\delta)^2r_{\delta}}{\gcd(d,\frac{N}{d})d\delta}$.
\end{thm}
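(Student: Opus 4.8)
The plan is to derive both conclusions directly from the transformation law of Dedekind's $\eta$-function under the full modular group: for $\gamma=\begin{bmatrix} a & b \\ c & d\end{bmatrix}\in\text{SL}_2(\mathbb{Z})$ with $c>0$ one has $\eta(\gamma z)=\varepsilon(\gamma)(cz+d)^{1/2}\eta(z)$, where $\varepsilon(\gamma)$ is an explicit $24$-th root of unity expressible through Dedekind sums (Petersson--Rademacher). I will establish the modularity and the shape of the multiplier first, and then read off the cusp orders.

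For the transformation law, fix $\gamma=\begin{bmatrix} a & b \\ c & d\end{bmatrix}\in\Gamma_0(N)$ and $\delta\mid N$. Since $N\mid c$ and $\delta\mid N$, we have $\delta\mid c$, so
$$\delta(\gamma z)=\frac{a(\delta z)+b\delta}{(c/\delta)(\delta z)+d}=\gamma_\delta(\delta z),\qquad \gamma_\delta:=\begin{bmatrix} a & b\delta \\ c/\delta & d\end{bmatrix}\in\text{SL}_2(\mathbb{Z}).$$
Applying the $\eta$-transformation factorwise and using $(c/\delta)(\delta z)+d=cz+d$ gives $\eta(\delta\gamma z)=\varepsilon(\gamma_\delta)(cz+d)^{1/2}\eta(\delta z)$. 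Multiplying over $\delta\mid N$ yields $f(\gamma z)=\left(\prod_{\delta\mid N}\varepsilon(\gamma_\delta)^{r_\delta}\right)(cz+d)^{\ell}f(z)$ with $\ell=\tfrac12\sum_{\delta\mid N}r_\delta$, so it remains to identify the product of multipliers with $\chi(d)$.

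This identification is the heart of the theorem, and I expect it to be the main obstacle. Substituting the explicit formula for $\varepsilon(\gamma_\delta)$, the exponent of the resulting root of unity splits into a linear part and a Dedekind-sum part. Because the lower-left entry of $\gamma_\delta$ is $c/\delta$, the term $\tfrac{a+d}{12(c/\delta)}=\tfrac{(a+d)\delta}{12c}$ collects, after weighting by $r_\delta$, into something proportional to $\sum_{\delta\mid N}\delta r_\delta$; applying Dedekind reciprocity to each $s(d,c/\delta)$ produces a second linear term proportional to $\sum_{\delta\mid N}(N/\delta)r_\delta$. The two hypotheses $\sum_{\delta\mid N}\delta r_\delta\equiv0$ and $\sum_{\delta\mid N}(N/\delta)r_\delta\equiv0\pmod{24}$ are precisely what force these contributions to vanish as $24$-th roots of unity, which is where the modulus $24$ and the exact shape of the conditions are forced. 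What survives is a product of Jacobi symbols which, after an application of quadratic reciprocity, assembles into $\left(\tfrac{(-1)^{\ell}\prod_{\delta\mid N}\delta^{r_\delta}}{d}\right)=\chi(d)$, completing the transformation law.

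For the cusp-order statement, with $f\in M_\ell(\Gamma_0(N),\chi)$ now in hand, fix a cusp $c/d$ with $d\mid N$ and $\gcd(c,d)=1$ and choose $\sigma=\begin{bmatrix} c & \ast \\ d & \ast\end{bmatrix}\in\text{SL}_2(\mathbb{Z})$, so that $\sigma\infty=c/d$. I will compute the leading exponent in the expansion of $(f|_\ell\sigma)(z)$ by applying the same conjugation device to each $\eta(\delta\sigma z)$: writing $\begin{bmatrix}\delta & 0\\ 0 & 1\end{bmatrix}\sigma=\sigma'\begin{bmatrix} a' & b'\\ 0 & d'\end{bmatrix}$ with $\sigma'\in\text{SL}_2(\mathbb{Z})$ and $a'd'=\delta$, the $\eta$-transformation reduces $\eta(\delta\sigma z)$ to an $\text{SL}_2(\mathbb{Z})$-translate whose leading behaviour is governed by the $q^{1/24}$ factor, giving a local order of $\eta(\delta z)$ proportional to $\gcd(d,\delta)^2/\delta$. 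Normalizing by the width of the cusp, which for $d\mid N$ equals $\tfrac{N}{\gcd(d,N/d)\,d}$, and summing with weights $r_\delta$ produces the stated order $\tfrac{N}{24}\sum_{\delta\mid N}\tfrac{\gcd(d,\delta)^2 r_\delta}{\gcd(d,N/d)\,d\,\delta}$; the positivity of all these quantities is then the criterion for holomorphy at the cusps.
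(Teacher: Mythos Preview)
The paper does not prove this statement at all: Theorem~\ref{thm_ono1} is quoted verbatim from \cite[Theorems~1.64 and~1.65]{ono} as a known result, with no argument supplied. So there is no ``paper's own proof'' to compare your proposal against.

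That said, your sketch follows the standard route to these theorems (as in Ono's book or the original arguments of Newman and Ligozat): reduce each factor $\eta(\delta\gamma z)$ to $\eta(\gamma_\delta(\delta z))$ via the conjugation $\gamma_\delta=\begin{bmatrix}a & b\delta\\ c/\delta & d\end{bmatrix}$, invoke the explicit $\eta$-multiplier, and then show that the two $\bmod\ 24$ hypotheses kill the Dedekind-sum contributions so that only a Kronecker symbol in $d$ survives. The cusp-order computation via the Smith normal form decomposition of $\begin{bmatrix}\delta & 0\\ 0 & 1\end{bmatrix}\sigma$ and normalization by the cusp width is likewise the standard argument. As a high-level plan this is correct; the only caveat is that the ``identification of the product of multipliers with $\chi(d)$'' step, which you flag as the main obstacle, genuinely requires careful bookkeeping with the Petersson--Knopp formula for $\varepsilon$ and quadratic reciprocity, and your description of how the two congruence conditions enter (one from the linear term, one from Dedekind reciprocity) is schematic rather than a proof. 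If you intend to supply a full argument rather than cite the result, that step needs to be written out in detail.
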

Suppose that $f(z)$ is an eta-quotient satisfying the conditions of Theorem \ref{thm_ono1}. 
If $f(z)$ is holomorphic at all of the cusps of $\Gamma_0(N)$, then $f(z)\in M_{\ell}(\Gamma_0(N), \chi)$. 
\begin{defn}
Let $m$ be a positive integer and $f(z) = \sum_{n=0}^{\infty} a(n)q^n \in M_{\ell}(\Gamma_0(N),\chi)$. Then the action of Hecke operator $T_m$ on $f(z)$ is defined by 
\begin{align*}
f(z)|T_m := \sum_{n=0}^{\infty} \left(\sum_{d\mid \gcd(n,m)}\chi(d)d^{\ell-1}a\left(\frac{nm}{d^2}\right)\right)q^n.
\end{align*}
In particular, if $m=p$ is prime, we have 
\begin{align}\label{hecke1}
f(z)|T_p := \sum_{n=0}^{\infty} \left(a(pn)+\chi(p)p^{\ell-1}a\left(\frac{n}{p}\right)\right)q^n.
\end{align}
We note that $a(n)=0$ unless $n$ is a nonnegative integer.
\end{defn}
\begin{defn}\label{hecke2}
	A modular form $f(z)=\sum_{n=0}^{\infty}a(n)q^n \in M_{\ell}(\Gamma_0(N),\chi)$ is called a Hecke eigenform if for every $m\geq2$ there exists a complex number $\lambda(m)$ for which 
	\begin{align}\label{hecke3}
	f(z)|T_m = \lambda(m)f(z).
	\end{align}
\end{defn}
\section{Proof of Theorems \ref{thm1} and \ref{thm2}}
We use the theory of Hecke eigenforms to prove Theorems \ref{thm1} and \ref{thm2}. 
\begin{proof}[Proof of Theorem \ref{thm1}]
We have 
		\begin{align*}
		\sum_{n=0}^{\infty}\overline{\mathcal{EO}}(n)q^n &=\frac{(q^4; q^4)_{\infty}^3}{(q^2; q^2)_{\infty}^2} \equiv (q; q)_{\infty}^{8} \pmod{2}.
		\end{align*}
This gives
\begin{align*}
\sum_{n=0}^{\infty}\overline{\mathcal{EO}}(n)q^{3n+1} \equiv \eta^8(3z)\pmod 2.
\end{align*}	
Let $\eta^8(3z) = \sum_{n=1}^{\infty} a(n)q^n.$ Then $a(n) = 0$ if $n\not\equiv 1\pmod{3}$ and for all $n\geq 0$, 
\begin{align}\label{1_1}
\overline{\mathcal{EO}}(n) \equiv a(3n+1) \pmod 2.
\end{align}	
By Theorem \ref{thm_ono1}, we have $\eta^8(3z) \in S_4(\Gamma_0(9))$. Since $\eta^8(3z)$ is a Hecke eigenform (see, for example \cite{martin}), \eqref{hecke1} and \eqref{hecke3} yield
\begin{align*}
	\eta^8(3z)|T_p = \sum_{n=1}^{\infty} \left(a(pn) + p^3 a\left(\frac{n}{p}\right) \right)q^n = \lambda(p) \sum_{n=1}^{\infty} a(n)q^n,
\end{align*}
which implies 
\begin{align}\label{1.1}
	a(pn) + p^3 a\left(\frac{n}{p}\right) = \lambda(p)a(n).
\end{align}
Putting $n=1$ and noting that $a(1)=1$, we readily obtain $a(p) = \lambda(p)$.
Since $a(p)=0$ for all $p \not\equiv 1 \pmod{3}$, we have $\lambda(p) = 0$.
From \eqref{1.1}, we obtain 
\begin{align}\label{new-1.1}
a(pn) + p^3 a\left(\frac{n}{p}\right) = 0.
\end{align}
From \eqref{new-1.1}, we derive that for all $n \geq 0$ and $p\nmid r$, 
\begin{align}\label{1.2}
a(p^2n + pr) = 0
\end{align}
and  
\begin{align}\label{1.3}
a(p^2n) = - p^3 a(n)\equiv a(n) \pmod{2}.
\end{align}
Substituting $n$ by $3n-pr+1$ in \eqref{1.2} and together with \eqref{1_1}, we find that
\begin{align}\label{1.4}
\overline{\mathcal{EO}}\left(p^2n + \frac{p^2-1}{3}+ pr\frac{1-p^2}{3}\right) \equiv 0 \pmod 2.
\end{align}	
Substituting $n$ by $3n+1$ in \eqref{1.3} and using \eqref{1_1}, we obtain
\begin{align}\label{1.5}
\overline{\mathcal{EO}}\left(p^2n + \frac{p^2-1}{3}\right) \equiv \overline{\mathcal{EO}}(n) \pmod 2.
\end{align}
Since $p \geq 5$ is prime, so $3\mid (1-p^2)$ and $\gcd \left(\frac{1-p^2}{3} , p\right) = 1$.  
Hence when $r$ runs over a residue system excluding the multiple of $p$, so does $\frac{1-p^2}{3}r$. Thus \eqref{1.4} can be rewritten as
\begin{align}\label{1.6}
\overline{\mathcal{EO}}\left(p^2n + \frac{p^2-1}{3}+ pj\right) \equiv 0 \pmod 2,
\end{align}
where $p \nmid j$. 
\par Now, $p_i \geq 5$ are primes such that $p_i \not\equiv 1 \pmod 3$. Since 
\begin{align*}
p_1^2\dots p_{k}^2n + \frac{p_1^2\dots p_{k}^2-1}{3}=p_1^2\left(p_2^2\dots p_{k}^2n + \frac{p_2^2\dots p_{k}^2-1}{3}\right)+\frac{p_1^2-1}{3},
\end{align*}
using \eqref{1.5} repeatedly we obtain that
\begin{align}\label{1.7}
\overline{\mathcal{EO}}\left(p_1^2\dots p_{k}^2n + \frac{p_1^2\dots p_{k}^2-1}{3}\right) \equiv \overline{\mathcal{EO}}(n) \pmod{2}.
\end{align}
Let $j\not\equiv 0\pmod{p_{k+1}}$. Then \eqref{1.6} and \eqref{1.7} yield
\begin{align*}
\overline{\mathcal{EO}}\left(p_1^2\dots p_{k+1}^2n + \frac{p_1^2\dots p_{k}^2p_{k+1}(3j+p_{k+1})-1}{3}\right) \equiv 0 \pmod{2}.
\end{align*}
This completes the proof of the theorem.
\end{proof}
To prove Theorem \ref{thm2}, we need that the eta-quotient $\eta^5(96z)/\eta(24z)$ is an eigenform for the Hecke operators $T_p$, where $p\equiv 1\pmod{24}$. This has been observed to be true by Scott Ahlgren.
We now present below the proof given by Ahlgren which was communicated to us through an email. 
Let $F_1=\eta^5(24z)/\eta(96z)$, $F_7=\eta^3(24z)\eta(96z)$, $F_{13}=\eta(24z)\eta^3(96z)$, and $F_{19}=\eta^5(96z)/\eta(24z)$. Then $F_j$ is supported on exponents congruent to $j \pmod{24}$. 
The Hecke operators $T_p$ for $p\equiv 5, 11, 17, 23\pmod{24}$ annihilate each of these forms. The Hecke operators $T_p$ for $p\equiv 1, 5, 13, 19 \pmod{24}$ map $F_j$ to a multiple of $F_{j'}$, 
where $j'\equiv pj\pmod{24}$. It turns out that a linear combination of the forms $F_j$ is an eigenform of all of the Hecke operators. 
In \cite[p. 209]{kohler}, equation (13.84) expresses the linear combination as an eigenform.
Since the $F_j$ are supported on distinct classes of coefficients, it follows that $F_j$ are eigenforms of all the Hecke operators. 
\begin{proof}[Proof of Theorem \ref{thm2}]
We first recall the following $2$-dissection formula from \cite[Entry 25, p. 40]{berndt}: 
	\begin{align}\label{2_3}
			\frac{1}{(q;q)_{\infty}^2} = \frac{(q^8;q^8)_{\infty}^5}{(q^2;q^2)_{\infty}^5(q^{16};q^{16})_{\infty}^2} 
			+ 2q \frac{(q^4;q^4)_{\infty}^2(q^{16};q^{16})_{\infty}^2}{(q^2;q^2)_{\infty}^5(q^{8};q^{8})_{\infty}}.
	\end{align}
From \eqref{gen-2}, we have 
\begin{align}\label{new-100}
 \sum_{n=0}^{\infty}\overline{\mathcal{EO}}(2n)q^n=\frac{(q^2; q^2)_{\infty}^3}{(q; q)_{\infty}^2}.
\end{align}
Combining \eqref{2_3} and \eqref{new-100}, and then extracting the terms with odd powers of $q$, we deduce that
\begin{align}\label{new-101}
 \sum_{n=0}^{\infty}\overline{\mathcal{EO}}(4n+2)q^n = 2 \frac{(q^2;q^2)_{\infty}^2(q^8;q^8)_{\infty}^2}{(q;q)_{\infty}^2(q^4; q^4)_{\infty}}.
\end{align}
We again combine \eqref{2_3} and \eqref{new-101}, and then extract the terms with odd powers of $q$ to obtain 
\begin{align*}
\sum_{n=0}^{\infty}\overline{\mathcal{EO}} (8n+6)q^n = 4 \frac{(q^2;q^2)_{\infty}(q^4;q^4)_{\infty}(q^8;q^8)_{\infty}^2}{(q;q)_{\infty}^3}.
\end{align*}	
Since $(q; q)_{\infty}^2 \equiv (q^2;q^2)_{\infty} \pmod{2}$, we have
\begin{align*}
\sum_{n=0}^{\infty}\overline{\mathcal{EO}} (8n+6)q^n \equiv 4 \frac{(q^4;q^4)_{\infty}^5}{(q;q)_{\infty}} \pmod 8.
\end{align*}	
This gives
	\begin{align*}
	\sum_{n=0}^{\infty}\overline{\mathcal{EO}} (8n+6)q^{24n+19} \equiv 4 \frac{\eta(96z)^5}{\eta(24z)} \pmod{8}. 
	\end{align*}
Let  $\frac{\eta(96z)^5}{\eta(24z)}= \sum_{n=1}^{\infty}a(n)q^n$. It is clear that $a(n)=0$ if $n\not\equiv 19 \pmod{24}.$ Also, for all $n\geq 0$,
\begin{align}\label{2}
	\overline{\mathcal{EO}}(8n+6)\equiv 4 a(24n+19) \pmod{8}.
\end{align}
By Theorem \ref{thm_ono1}, we have $\frac{\eta(96z)^5}{\eta(24z)} \in S_2\left(\Gamma_0(2304)\right)$.  
Since $\frac{\eta(96z)^5}{\eta(24z)}$ is a Hecke eigenform for the Hecke operator $T_p$, where $p\equiv 1 \pmod{24}$, \eqref{hecke1} and \eqref{hecke3} yield  
\begin{align}\label{2.1}
a(pn) + p  \left(\frac{2}{p}\right)  a\left(\frac{n}{p}\right) = \lambda(p)a(n).
\end{align}	
Putting $n=19$ in \eqref{2.1} and noting that $p \not\equiv 19 \pmod{24}$, we obtain $a(19p) = \lambda (p) a(19)$.
Also, $a(19)=1$, and hence $a(19p) = \lambda (p)$. Thus \eqref{2.1} gives 
\begin{align}\label{new-102}
a(pn) + p  \left(\frac{2}{p}\right)  a\left(\frac{n}{p}\right) = a(19p)a(n).
\end{align}
From \eqref{new-102}, we obtain that for all $n\geq 0$ and $p\nmid r$,
\begin{align}\label{2.2}
a(p^2n) + a\left(n\right) \equiv a(19p)a(pn)\pmod{2}
\end{align}
and
\begin{align}\label{2.3}
a(p^2n+pr) = a(19p)a(pn+r).
\end{align}
Let $A(n) = a(24n+19)$. Let $p$ be a prime such that $p\equiv 1\pmod{24}$. Now, 
replacing $n$ by $24n-pr+19$ in \eqref{2.3}, we obtain
\begin{align}\label{2.4}
	A\left(p^2n+ 19\frac{p^2-1}{24}+pr\frac{1-p^2}{24}\right)
	= A\left(19\frac{p-1}{24}\right)A\left(pn+ 19\frac{p-1}{24}+r\frac{1-p^2}{24}\right).
\end{align}
We note that $\gcd\left(\frac{1-p^2}{24} , p\right) = 1$.   
Hence when $r$ runs over a residue system excluding the multiple of $p$, so does $\frac{1-p^2}{24}r$.
Thus, \eqref{2.4} can be rewritten as  
\begin{align}\label{2.5}
A\left(p^2n+ 19\frac{p^2-1}{24}+pj\right)
= A\left(19\frac{p-1}{24}\right)A\left(pn+ 19\frac{p-1}{24}+j\right),
\end{align}
where $p\nmid j$.
Similarly, replacing $n$ by $24n+19$ in \eqref{2.2}, we have, modulo $2$
\begin{align}\label{2.6}
A\left(p^2n+ 19\frac{p^2-1}{24}\right) +A(n)
\equiv A\left(19\frac{p-1}{24}\right)A\left(pn+ 19\frac{p-1}{24}\right).
\end{align}	
Let $p$ be such that $\overline{\mathcal{EO}}\left(\frac{19p-1}{3}\right)\equiv 0\pmod{8}$. Then, using the relation $\overline{\mathcal{EO}} (8n+6)\equiv 4A(n)\pmod{8}$, 
we have $A\left(19\frac{p-1}{24}\right)\equiv 0\pmod{2}$. 
Hence, \eqref{2.5} and \eqref{2.6} imply
\begin{align}\label{2.7}
A\left(p^2n+ 19\frac{p^2-1}{24}+pj\right)\equiv 0\pmod{2}
\end{align}
and 
\begin{align}\label{2.8}
A\left(p^2n+ 19\frac{p^2-1}{24}\right) 
\equiv  A(n) \pmod{2}.
\end{align}	
From our hypothesis, we have $p_i \geq 5$ are primes such that $p_i \equiv 1 \pmod{24}$ and $A\left(19\frac{p_i-1}{24}\right) \equiv 0 \pmod{2}$. Now, using \eqref{2.8} we deduce that
\begin{align*}
A\left(p_1^2\dots p_{k}^2n+ 19\frac{p_1^2\dots p_{k}^2-1}{24}\right) 
\equiv  A(n) \pmod 2.
\end{align*}	
Replacing $n$ by $p_{k+1}^2n+ 19\frac{p_{k+1}^2-1}{24} +p_{k+1}j$, and then using \eqref{2.7} we obtain 
\begin{align*}
A\left(p_1^2\dots p_{k}^2p_{k+1}^2n+ 19\frac{p_1^2\dots p_{k}^2p_{k+1}^2-1}{24}+ p_1^2\dots p_{k}^2p_{k+1}j\right) 
\equiv  0 \pmod 2.
\end{align*}
We complete the proof by using the fact that $\overline{\mathcal{EO}} (8n+6)\equiv 4A(n)\pmod{8}$.
\end{proof}

	\section{Proof of Theorem \ref{thm5}}
We prove Theorem \ref{thm5} using the approach developed in \cite{radu1, radu2}. To this end, we first recall some definitions and results 
from \cite{radu1, radu2}. For a positive integer $M$, let $R(M)$ be the set of integer sequences $r=(r_\delta)_{\delta\mid M}$ indexed by the positive divisors of $M$. 
If $r \in R(M)$ and $1=\delta_1<\delta_2< \cdots <\delta_k=M$ 
are the positive divisors of $M$, we write $r=(r_{\delta_1}, \ldots, r_{\delta_k})$. Define $c_r(n)$ by 
\begin{align}
\sum_{n=0}^{\infty}c_r(n)q^n:=\prod_{\delta\mid M}(q^{\delta};q^{\delta})^{r_{\delta}}_{\infty}=\prod_{\delta\mid M}\prod_{n=1}^{\infty}(1-q^{n \delta})^{r_{\delta}}.
\end{align}
The approach to proving congruences for $c_r(n)$ developed by Radu \cite{radu1, radu2} reduces the number of cases that one must check as compared with the classical method which uses Sturm's bound alone.
\par 
Let $m$ be a positive integer. For any integer $s$, let $[s]_m$ denote the residue class of $s$ in $\mathbb{Z}_m:= \mathbb{Z}/ {m\mathbb{Z}}$. 
Let $\mathbb{Z}_m^{*}$ be the set of all invertible elements in $\mathbb{Z}_m$. Let $\mathbb{S}_m\subseteq\mathbb{Z}_m$  be the set of all squares in $\mathbb{Z}_m^{*}$. For $t\in\{0, 1, \ldots, m-1\}$
and $r \in R(M)$, we define a subset $P_{m,r}(t)\subseteq\{0, 1, \ldots, m-1\}$ by
\begin{align*}
P_{m,r}(t):=\left\{t': \exists [s]_{24m}\in \mathbb{S}_{24m} ~ \text{such} ~ \text{that} ~ t'\equiv ts+\frac{s-1}{24}\sum_{\delta\mid M}\delta r_\delta \pmod{m} \right\}.
\end{align*}
\begin{defn}
	Suppose $m, M$ and $N$ are positive integers, $r=(r_{\delta})\in R(M)$ and $t\in \{0, 1, \ldots, m-1\}$. Let $k=k(m):=\gcd(m^2-1,24)$ and write  
	\begin{align*}
	\prod_{\delta\mid M}\delta^{|r_{\delta}|}=2^s\cdot j,
	\end{align*}
	where $s$ and $j$  are nonnegative integers with $j$ odd. The set $\Delta^{*}$ consists of all tuples $(m, M, N, (r_{\delta}), t)$ satisfying these conditions and all of the following.
	\begin{enumerate}
		\item Each prime divisor of $m$ is also a divisor of $N$.
		\item $\delta\mid M$ implies $\delta\mid mN$ for every $\delta\geq1$ such that $r_{\delta} \neq 0$.
		\item $kN\sum_{\delta\mid M}r_{\delta} mN/\delta \equiv 0 \pmod{24}$.
		\item $kN\sum_{\delta\mid M}r_{\delta} \equiv 0 \pmod{8}$.  
		\item  $\frac{24m}{\gcd{(-24kt-k{\sum_{{\delta}\mid M}}{\delta r_{\delta}}},24m)}$ divides $N$.
		\item If $2\mid m$, then either $4\mid kN$ and $8\mid sN$ or $2\mid s$ and $8\mid (1-j)N$.
	\end{enumerate}
\end{defn}
Throughout this section we take $\Gamma=\text{SL}_2(\mathbb{Z})$. Let $m, M, N$ be positive integers. For $\gamma=
\begin{bmatrix}
	a  &  b \\
	c  &  d     
\end{bmatrix} \in \Gamma$, $r\in R(M)$ and $r'\in R(N)$, set 
	\begin{align*}
	p_{m,r}(\gamma):=\min_{\lambda\in\{0, 1, \ldots, m-1\}}\frac{1}{24}\sum_{\delta\mid M}r_{\delta}\frac{\gcd^2(\delta a+ \delta k\lambda c, mc)}{\delta m}
	\end{align*}
and 
	\begin{align*}
	p_{r'}^{*}(\gamma):=\frac{1}{24}\sum_{\delta\mid N}r'_{\delta}\frac{\gcd^2(\delta, c)}{\delta}.
	\end{align*}
	\begin{lem}\label{lem1}\cite[Lemma 4.5]{radu1} Let $u$ be a positive integer, $(m, M, N, r=(r_{\delta}), t)\in\Delta^{*}$ and $r'=(r'_{\delta})\in R(N)$. 
	Let $\{\gamma_1,\gamma_2, \ldots, \gamma_n\}\subseteq \Gamma$ be a complete set of representatives of the double cosets of $\Gamma_{0}(N) \backslash \Gamma/ \Gamma_\infty$. 
	Assume that $p_{m,r}(\gamma_i)+p_{r'}^{*}(\gamma_i) \geq 0$ for all $1 \leq i \leq n$. Let $t_{min}=\min_{t' \in P_{m,r}(t)} t'$ and
	\begin{align*}
	\nu:= \frac{1}{24}\left\{ \left( \sum_{\delta\mid M}r_{\delta}+\sum_{\delta\mid N}r'_{\delta}\right)[\Gamma:\Gamma_{0}(N)] -\sum_{\delta\mid N} \delta r'_{\delta}\right\}-\frac{1}{24m}\sum_{\delta\mid M}\delta r_{\delta} 
	- \frac{ t_{min}}{m}.
 	\end{align*}	
	If the congruence $c_r(mn+t')\equiv0\pmod u$ holds for all $t' \in P_{m,r}(t)$ and $0\leq n\leq \lfloor\nu\rfloor$, then it holds for all $t'\in P_{m,r}(t)$ and $n\geq0$.
	\end{lem}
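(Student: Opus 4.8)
The plan is to deduce this finite-verification criterion from Sturm's theorem by manufacturing a single holomorphic modular form whose Fourier coefficients encode the progressions $mn+t'$ for $t'\in P_{m,r}(t)$. The guiding observation is that $\prod_{\delta\mid M}(q^\delta;q^\delta)_\infty^{r_\delta}$ is, up to a power of $q$, the eta-quotient $\prod_{\delta\mid M}\eta(\delta z)^{r_\delta}$, so sieving out an arithmetic progression of its coefficients and repairing the sieved object with an auxiliary eta-quotient $\prod_{\delta\mid N}\eta(\delta z)^{r'_\delta}$ should again be modular. Once modularity is secured, Sturm's theorem upgrades ``vanishing modulo $u$ up to a computable bound'' to ``vanishing modulo $u$ everywhere,'' and the quantity $\nu$ is precisely Sturm's bound transported back into the index $n$.

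First I would form, for each $t'\in P_{m,r}(t)$, the series extracting the coefficients $c_r(mn+t')$ from $\prod_{\delta\mid M}(q^\delta;q^\delta)_\infty^{r_\delta}$ by the finite Fourier (roots-of-unity) filter modulo $m$, assemble these over all $t'\in P_{m,r}(t)$, multiply by the auxiliary eta-quotient attached to $r'$, and normalize by a power of $q$ governed by $t_{min}$, obtaining a candidate $\Phi(z)$. The role of the square classes $\mathbb{S}_{24m}$ in the definition of $P_{m,r}(t)$ is exactly to make this collection of residues stable under the way $\Gamma_0(N)$ permutes the associated characters: acting by an element of $\Gamma_0(N)$ multiplies the exponent class modulo $24m$ by a square $s$ and shifts it by $\frac{s-1}{24}\sum_{\delta\mid M}\delta r_\delta$, which is the very relation defining $P_{m,r}(t)$. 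The six numbered conditions in the definition of $\Delta^*$ are then the analogues, for the sieved product, of the congruence hypotheses in Theorem \ref{thm_ono1}: they guarantee that $\Phi$ has integer weight $\ell=\frac{1}{2}\bigl(\sum_{\delta\mid M}r_\delta+\sum_{\delta\mid N}r'_\delta\bigr)[\Gamma:\Gamma_0(N)]$ and transforms under $\Gamma_0(N)$ with a well-defined quadratic Nebentypus $\chi$.

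Next I would verify holomorphy at every cusp, which promotes $\Phi$ from a weakly holomorphic form to a genuine element of $M_\ell(\Gamma_0(N),\chi)$. Taking the double-coset representatives $\gamma_1,\dots,\gamma_n$ of $\Gamma_0(N)\backslash\Gamma/\Gamma_\infty$, the order of vanishing of $\Phi$ at the cusp determined by $\gamma_i$ splits as the contribution of the sieved $M$-part plus that of the auxiliary $N$-part. The first is read off from the expansion of $\eta(\delta z)^{r_\delta}$ at a general cusp $a/c$ combined with the sieve, and the minimization over $\lambda\in\{0,\dots,m-1\}$ in $p_{m,r}(\gamma)$ records the choice of sieve representative minimizing this order; the second is the Ligozat cusp-order formula of Theorem \ref{thm_ono1}, which is exactly $p^*_{r'}(\gamma)$. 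Hence holomorphy at $\gamma_i$ is precisely the hypothesis $p_{m,r}(\gamma_i)+p^*_{r'}(\gamma_i)\geq 0$. With $\Phi\in M_\ell(\Gamma_0(N),\chi)$ in hand, Sturm's theorem gives that vanishing of its coefficients modulo $u$ up to $\frac{\ell}{12}[\Gamma:\Gamma_0(N)]$ forces vanishing of all of them; subtracting the order of vanishing of $\Phi$ at $\infty$ (which accounts for the $\sum_{\delta\mid N}\delta r'_\delta$ and $\frac{1}{24m}\sum_{\delta\mid M}\delta r_\delta$ terms) and the offset $t_{min}$, then dividing by $m$ to pass from the exponent of $\Phi$ to the index $n$, turns this bound into exactly $\lfloor\nu\rfloor$ and completes the reduction.

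The main obstacle is the cusp computation above: one must prove rigorously that the order of vanishing at $\gamma_i$ of the roots-of-unity-sieved eta-quotient equals $p_{m,r}(\gamma_i)$, tracking how the sieve over residues modulo $m$ interacts with the transformation of $\eta(\delta z)^{r_\delta}$ under $\gamma_i$ at a general cusp. This is where the factor $24$, the square classes $\mathbb{S}_{24m}$, and the minimizing parameter $\lambda$ all enter together, and the bookkeeping — including checking that the $\Delta^*$ conditions are exactly what is needed for integrality of $\ell$ and well-definedness of $\chi$ — is the technical heart of the argument. Once modularity and cusp-holomorphy are established, the invocation of Sturm's theorem and the identification of its bound with $\nu$ are routine.
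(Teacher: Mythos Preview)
The paper does not prove this lemma: it is quoted as \cite[Lemma 4.5]{radu1} and used as a black box in the proof of Theorem~\ref{thm5}, so there is no argument in the present paper against which to compare your proposal.

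That said, your outline is a faithful sketch of Radu's original strategy: sieve the eta-quotient attached to $r$ along the residue classes in $P_{m,r}(t)$ (whose definition via $\mathbb{S}_{24m}$ is exactly what makes the sieved object $\Gamma_0(N)$-stable), multiply by the auxiliary eta-quotient attached to $r'$, use the $\Delta^*$ conditions to secure modularity, verify holomorphy at each cusp via the hypothesis $p_{m,r}(\gamma_i)+p^*_{r'}(\gamma_i)\ge 0$, and then apply Sturm's bound. One slip to correct: the weight of the resulting form is $\ell=\tfrac{1}{2}\bigl(\sum_{\delta\mid M}r_\delta+\sum_{\delta\mid N}r'_\delta\bigr)$, not this quantity times $[\Gamma:\Gamma_0(N)]$. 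The index enters only through Sturm's bound $\tfrac{\ell}{12}[\Gamma:\Gamma_0(N)]$, and it is precisely that product which appears as the first term inside the braces in the definition of $\nu$; your later sentence invoking Sturm has it right, so this looks like a momentary conflation rather than a structural error.
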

	To apply Lemma \ref{lem1} we utilize the following result, which gives a complete set of representatives of the double cosets in  
	$\Gamma_{0}(N) \backslash \Gamma/ \Gamma_\infty$. 
	\begin{lem}\label{lem2}\cite[Lemma 4.3]{wang} If $N$ or $\frac{1}{2}N$ is a square-free integer, then
		\begin{align*}
		\bigcup_{\delta\mid N}\Gamma_0(N)\begin{bmatrix}
		1  &  0 \\
		\delta  &  1      
		\end{bmatrix}\Gamma_ {\infty}=\Gamma.
		\end{align*}
	\end{lem}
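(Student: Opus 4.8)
Since Lemma~\ref{lem2} is a statement purely about the group $\Gamma=\mathrm{SL}_2(\mathbb{Z})$, the plan is to drop all reference to modular forms and argue via cusps. First I would record the standard dictionary: because the stabiliser of $\infty=\tfrac10$ in $\Gamma$ equals $\pm\Gamma_\infty$ and $-I\in\Gamma_0(N)$, the assignment $\Gamma_0(N)\gamma\Gamma_\infty\mapsto\Gamma_0(N)\cdot(\gamma\infty)$ is a well-defined bijection between $\Gamma_0(N)\backslash\Gamma/\Gamma_\infty$ and the set of cusps of $\Gamma_0(N)$; under it the double coset of $\begin{bmatrix}1&0\\\delta&1\end{bmatrix}$ corresponds to the cusp $\tfrac1\delta$. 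Thus the asserted identity is equivalent to the statement that the cusps $\tfrac1\delta$ with $\delta\mid N$ form a complete set of representatives for the $\Gamma_0(N)$-orbits on $\mathbb{P}^1(\mathbb{Q})$.

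To prove that, I would first check these cosets are pairwise distinct, which needs no hypothesis on $N$: writing a cusp in lowest terms as $\tfrac ac$ and acting by a matrix $\begin{bmatrix}a'&b'\\c'&d'\end{bmatrix}\in\Gamma_0(N)$, so $N\mid c'$ and hence $\gcd(d',N)=1$, replaces the denominator $c$ by $c'a+d'c\equiv d'c\pmod N$, so that $\gcd(c'a+d'c,N)=\gcd(d'c,N)=\gcd(c,N)$; therefore $\gcd(c,N)$ is an invariant of the $\Gamma_0(N)$-orbit, and since the denominator of $\tfrac1\delta$ has $\gcd(\delta,N)=\delta$ for $\delta\mid N$, the listed cosets are distinct. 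It then remains to count: the number of cusps of $\Gamma_0(N)$ is $\sum_{d\mid N}\varphi\big(\gcd(d,N/d)\big)$, and under the hypothesis this collapses to $\sum_{d\mid N}1=\tau(N)$, the number of positive divisors of $N$ — indeed if $N$ is squarefree then $\gcd(d,N/d)=1$ for every $d\mid N$, while if $\tfrac12N$ is squarefree, say $N=2M$ with $M$ odd and squarefree, the divisors of $N$ are the integers $d$ and $2d$ for $d\mid M$, and one checks $\gcd(d,2M/d)=\gcd(2d,M/d)=1$. Having exhibited $\tau(N)$ pairwise distinct double cosets out of a total of exactly $\tau(N)$, they must be all of them, which gives $\bigcup_{\delta\mid N}\Gamma_0(N)\begin{bmatrix}1&0\\\delta&1\end{bmatrix}\Gamma_\infty=\Gamma$.

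The step that really carries the hypothesis is the cusp count, equivalently the fact that $\gcd(c,N)$ is a \emph{complete} invariant of the $\Gamma_0(N)$-orbit of $\tfrac ac$; in general there is a finer invariant, a residue class modulo $\gcd\big(\gcd(c,N),\,N/\gcd(c,N)\big)$, and the point of the hypothesis is precisely that $\gcd(d,N/d)=1$ for every $d\mid N$, so this extra invariant is vacuous — which is also the subtle point that quietly disposes of the prime $2$ in the ``$\tfrac12N$ squarefree'' case. As an alternative to invoking the cusp-count formula, the same computation $\gcd(d,N/d)=1$ lets one instead prove covering directly, namely that every cusp $\tfrac ac$ is $\Gamma_0(N)$-equivalent to $\tfrac1{\gcd(c,N)}$ via the classical equivalence criterion for cusps of $\Gamma_0(N)$, and then lift back through the bijection of the first paragraph, absorbing a possible $-I$ into $\Gamma_0(N)$, to reach the same conclusion. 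I expect the main obstacle to be keeping the bookkeeping with $\pm I$ and with the residue-class invariant clean, rather than any genuinely hard idea.
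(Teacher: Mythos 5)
The paper does not prove this lemma at all --- it is quoted verbatim from Wang \cite[Lemma 4.3]{wang} --- so there is no argument in the text to compare yours against. Your overall strategy (identify $\Gamma_0(N)\backslash\Gamma/\Gamma_\infty$ with the cusps of $\Gamma_0(N)$ using $-I\in\Gamma_0(N)$, show the cusps $\tfrac1\delta$ are pairwise inequivalent via the orbit invariant $\gcd(c,N)$, and then match the count against $\sum_{d\mid N}\varphi(\gcd(d,N/d))$) is sound, and it is essentially the standard way this lemma is established in the Radu--Wang literature. The bijection, the invariance of $\gcd(c,N)$, and the distinctness of the $\tau(N)$ listed cosets are all correct as you wrote them, and for the squarefree case (which is all this paper uses, since it applies the lemma with $N=10$) your proof is complete.

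There is, however, a genuine error in your treatment of the second case. You write ``if $\tfrac12N$ is squarefree, say $N=2M$ with $M$ odd and squarefree'' --- but $N/2$ squarefree does not force $N/2$ odd, and if $N/2$ \emph{is} odd and squarefree then $N$ itself is already squarefree, so your second case adds nothing new. The case actually covered by the hypothesis ``$\tfrac12N$ squarefree'' and not by ``$N$ squarefree'' is $N=4M''$ with $M''$ odd squarefree (e.g.\ $N=4$ or $N=12$), and there your key claim fails: for $d=2$ one has $\gcd(d,N/d)=2\neq 1$. The lemma is still true for such $N$, but for a different reason than the one you give: what the hypothesis really guarantees is $\gcd(d,N/d)\in\{1,2\}$ for every $d\mid N$, and since $\varphi(2)=1$ the cusp count still collapses to $\sum_{d\mid N}\varphi(\gcd(d,N/d))=\tau(N)$. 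Correspondingly, your closing remark that ``the point of the hypothesis is precisely that $\gcd(d,N/d)=1$'' is wrong, and your proposed alternative route (every $\tfrac ac$ is equivalent to $\tfrac1{\gcd(c,N)}$ because the residue-class invariant is vacuous) needs the same repair: the finer invariant lives in $\bigl(\mathbb{Z}/\gcd(d,N/d)\mathbb{Z}\bigr)^{*}$, which is trivial for modulus $2$ as well as $1$. With that one correction the argument is complete.
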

\begin{proof}[Proof of Theorem \ref{thm5}]
Due to \eqref{andrews-cong} we need to prove our congruences modulo $4$ only.
We have 
\begin{align*}
	 \sum_{n=0}^{\infty}\overline{\mathcal{EO}}(n)q^n=\frac{(q^4; q^4)_{\infty}^3}{(q^2; q^2)_{\infty}^2}&=\frac{(q^2; q^2)_{\infty}^2(q^4; q^4)_{\infty}^3}{(q^2; q^2)_{\infty}^4}\\
	 &\equiv\frac{(q^2; q^2)_{\infty}^2(q^4; q^4)_{\infty}^3}{(q^4; q^4)_{\infty}^2}\pmod{4}\\
	&= (q^2; q^2)_{\infty}^2(q^4; q^4)_{\infty}\pmod{4}.
\end{align*}
Let $(m,M,N,r,t)=(50,8,10,(0,2,1,0),18)$. It is easy to verify that $(m,M,N,r,t) \in \Delta^{*}$ and $P_{m,r}(t)=\{18, 28, 38, 48\}$.
   From  Lemma \ref{lem2} we know that $\left\{\begin{bmatrix}
	1  &  0 \\
	\delta  &  1      
	\end{bmatrix}:\delta|10 \right\}$ forms a complete set of double coset representatives of $\Gamma_{0}(N) \backslash \Gamma/ \Gamma_\infty$.
	Let $r'=(0,0,0,0,0,0)\in R(10)$. We have used $Sage$ to verify that
	$p_{m,r}(\gamma_{\delta})+p_{r'}^{*}(\gamma_{\delta}) \geq 0$ for each $\delta \mid N$, where $\gamma_{\delta}=\begin{bmatrix}
	1  &  0 \\
	\delta  &  1      
	\end{bmatrix}$. We compute that the upper bound in Lemma \ref{lem1} is $\lfloor\nu\rfloor=1$. 
	Using $Mathematica$ we verify that
	$\overline{\mathcal{EO}}(50n+t') \equiv 0 \pmod{4}$ for $n \leq 1$ and $t'\in P_{m, r}(t)$.
	Thus, by Lemma \ref{lem1}, we conclude that  $\overline{\mathcal{EO}}(50n+t') \equiv 0 \pmod{4}$ for any $n\geq0$, where $t'\in \{18, 28, 38, 48\}$.
	This completes the proof of the theorem.
	 \end{proof}
	\section{Proof of Theorems \ref{thm7}, \ref{thm6} and \ref{thm4}}
We prove Theorem \ref{thm7} by using the approach developed in \cite{ono3}. Recently, Jameson and Wieczorek \cite{Jameson} have done a similar study for the generalized Frobenius partitions.
To make this paper self-contained, we recall two results from \cite{Jameson}. Also see \cite{ono3}. Let $M_k^{!}\left(\Gamma_0(N_0), \chi \right)$ denote the space of weakly holomorphic modular forms.
\begin{thm}\label{Theorem-5}\cite[Theorem 5]{Jameson}
 Let $N_0, \alpha, \beta, t$ be integers with $N_0, \alpha, t$ positive, and let 
 \begin{align*}
  \sum_{n=0}^{\infty}c(n)q^{\alpha n+\beta}\in M_k^{!}\left(\Gamma_0(N_0), \chi \right),
 \end{align*}
where $c(n)$ are algebraic integers in some number field. For any arithmetic progression $r \pmod{t}$, there are infinitely many integers $N\equiv r \pmod{t}$ for which $c(N)$ is even.
\end{thm}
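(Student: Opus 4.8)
The plan is to follow the method Ono used for the parity of the partition function in arithmetic progressions \cite{ono3}. Here ``$c(N)$ even'' means $c(N)\in 2\mathcal{O}_K$, $\mathcal{O}_K$ being the ring of integers of the field containing the $c(n)$; if $f\equiv 0\pmod{2\mathcal{O}_K}$ every $c(N)$ is even and there is nothing to prove, so assume not. Suppose, for contradiction, that only finitely many $N\equiv r\pmod{t}$ have $c(N)$ even, and fix $N_1$ with $c(N)$ odd for all $N\equiv r\pmod{t}$, $N\ge N_1$. The first step is to \emph{sieve}: the series
\[
\widetilde{f}(z):=\sum_{N\equiv r\,(t)}c(N)\,q^{\alpha N+\beta}
\]
is obtained from $f$ by keeping only the Fourier exponents in the single residue class $\alpha r+\beta\pmod{\alpha t}$, so the standard twisting argument — writing this operator as a $\mathbb{Z}$-linear combination of twists of $f$ by Dirichlet characters modulo $\alpha t$ — shows $\widetilde{f}\in M_{k}^{!}(\Gamma_{0}(N_{0}(\alpha t)^{2}),\chi')$ for a suitable character $\chi'$, of the same integer weight $k$ and with coefficients in $\mathcal{O}_K$. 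Put $b:=\alpha t\ge 1$ and $a:=\alpha N_1'+\beta$, with $N_1'$ the least integer $\equiv r\pmod{t}$ that is $\ge N_1$. Reducing modulo $2\mathcal{O}_K$, our assumption says exactly that $\widetilde f$, viewed in $(\mathcal{O}_K/2\mathcal{O}_K)[[q]]$, is supported on exponents $\equiv a\pmod{b}$ with nonzero coefficient at all but finitely many of them; when $c(n)\in\mathbb{Z}$ this is just $\widetilde{f}\equiv q^{a}/(1-q^{b})+P(q)\pmod{2}$ for a Laurent polynomial $P$. In particular the nonzero coefficients of $\widetilde f\bmod 2\mathcal{O}_K$ have density $1/b>0$.

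The second step removes the cusp poles of $\widetilde f$ while keeping this arithmetic-progression shape. Fix an odd prime $p$ with $p\nmid N_{0}\alpha t$ and apply $U_{p}$ repeatedly. Since $U_{p}$ acts on $q$-expansions by $n\mapsto pn$ and every cusp of $\Gamma_{0}(N_{0}(\alpha t)^{2})$ has denominator prime to $p$, each application of $U_{p}$ divides the order of every cusp pole by $p$; hence $U_{p}^{m}\widetilde{f}$ is holomorphic at all cusps once $m$ is large, i.e.\ a genuine holomorphic modular form of integer weight $k$ on $\Gamma_{1}(M)$ (for a suitable $M$) with coefficients in $\mathcal{O}_K$. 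As $\gcd(p,b)=1$, modulo $2\mathcal{O}_K$ the form $U_{p}^{m}\widetilde{f}$ remains supported on a single class $a_{m}\pmod{b}$, with $a_{m}\equiv a\,p^{-m}\pmod{b}$, and is still cofinitely nonzero there, so its nonzero coefficients still have density $1/b>0$.

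The third step invokes Serre's lacunarity theorem: a holomorphic modular form of integer weight on $\Gamma_{1}(M)$ with coefficients in $\mathcal{O}_K$ has almost all Fourier coefficients in any prescribed ideal — in particular in $2\mathcal{O}_K$ (for weight $\ge 1$ this is the content; for weight $\le 0$ the form is a constant or zero, which is immediate). Applied to $U_{p}^{m}\widetilde{f}$ this contradicts the density-$1/b$ statement of the previous step, and the contradiction proves the theorem.

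I expect Serre's lacunarity theorem to be the only genuinely non-formal ingredient; granted it, everything else is mechanical, the one place care is needed being the bookkeeping in the sieving step (exact level and nebentypus of $\widetilde f$, integrality of its coefficients) and the verification that $U_{p}$ shrinks the pole order at \emph{every} cusp — which is precisely where the choice $p\nmid N_{0}\alpha t$ enters. As an alternative to the $U_{p}$-iteration one could clear the poles by multiplying $\widetilde f$ by $\Delta(z)^{2^{s}}$ for large $s$, which is holomorphic and, modulo $2$, equals $\sum_{n\ \mathrm{odd}}q^{2^{s}n^{2}}$; but then one would have to argue separately that the resulting (convolved) coefficient sequence, a parity count of representations by a quadratic form, is still non-lacunary, which is less clean than the transparent $q^{a_{m}}/(1-q^{b})$ shape that $U_{p}$ preserves.
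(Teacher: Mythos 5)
This theorem is not proved in the paper at all --- it is imported verbatim from Jameson--Wieczorek \cite{Jameson} --- but the construction the paper carries out in its proof of Theorem \ref{thm7}, namely multiplying by $\Delta^{2^j}(6tz)$ to land in $S_{12\cdot 2^j}(\Gamma_0(72t))$, is precisely the pole-clearing device of the standard (Ono-style) proof, i.e.\ the alternative you set aside at the end. The step of your argument that fails is the pole-clearing via $U_p$. It is not true that applying $U_p$ divides the order of every cusp pole by $p$: for $p\nmid N$ the operator $U_p$ raises the level from $N$ to $Np$, and while it does shrink the principal part at $\infty$ (the new coefficient of $q^{-h}$ is the old coefficient of $q^{-ph}$), at the cusps $a/c$ of $\Gamma_0(Np)$ with $p\mid c$ the local expansion of $f|U_p$ involves $f(pz)$-type terms and the pole order is typically \emph{multiplied} by $p$; already $(j-744)|U_p$ acquires a pole of order $p$ at the cusp $0$ of $\Gamma_0(p)$. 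The decisive obstruction is the weight: $U_p$ preserves the weight $k$, and the only holomorphic modular forms of weight $k\le 0$ are the constants (and $0$ when $k<0$). The theorem must apply --- and is applied in this very paper --- with $k=0$ to $\eta(12z)^4/\eta(6z)^4\in M_0^{!}(\Gamma_0(72))$, which genuinely has poles (the paper's own cusp computation gives the negative quantity $4\gcd(d,12)^2/12-4\gcd(d,6)^2/6=-1/3$ at $d=1$). No iterate $U_p^m\widetilde f$ of such a form can ever be holomorphic at all cusps, so your Step 2 collapses and Serre's theorem is never applicable. This cannot be repaired by a better choice of $p$ or $m$; the weight has to be raised.

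The repair is the route you dismissed, and the non-lacunarity worry there evaporates because the multiplier has an explicit theta shape modulo $2$: since $(1-x)^{8}\equiv 1-x^{8}\pmod 2$ and $\eta(z)^3=\sum_{m\ge0}(-1)^m(2m+1)q^{(2m+1)^2/8}$, one has $\Delta^{2^j}(\alpha tz)=\eta(\alpha tz)^{24\cdot 2^j}\equiv\sum_{m\ge0}q^{\alpha t2^j(2m+1)^2}\pmod 2$, supported on exponents divisible by $\alpha t$. Hence, with $\widetilde f\equiv q^{a}/(1-q^{\alpha t})+P(q)\pmod 2$ under your contradiction hypothesis, the holomorphic form $g:=\widetilde f\cdot\Delta^{2^j}(\alpha tz)$ of integral weight $k+12\cdot2^j$ (for $j$ large, exactly as in the paper's bound $2^j>t/12$) has $q^{a+\alpha tn}$-coefficient in the main term congruent to $\#\{m\ge0:\ 2^j(2m+1)^2\le n\}\pmod 2$. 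This count equals $m+1$ on the block $2^j(2m+1)^2\le n<2^j(2m+3)^2$ of length $2^j(8m+8)$, so it is odd on a set of $n$ of density $1/2$; the correction $P(q)\cdot\Delta^{2^j}(\alpha tz)$ is a finite sum of shifts of a lacunary series and only perturbs a density-zero set. Thus $g$ has a positive proportion of coefficients outside $2\mathcal{O}_K$, contradicting Serre's density theorem as quoted in the paper. That explicit parity count is the missing ingredient; with it in place, your sieving and integrality bookkeeping go through as you describe.
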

\begin{thm}\label{Theorem-6}\cite[Theorem 6]{Jameson}
Let $N_0, \alpha, \beta, t$ be integers with $N_0, \alpha$ positive, and $t>1$, and let 
 \begin{align*}
  \sum_{n=0}^{\infty}c(n)q^{\alpha n+\beta}\in M_k^{!}\left(\Gamma_0(N_0), \chi \right),
 \end{align*}
where $c(n)$ are algebraic integers in some number field. For any arithmetic progression $r \pmod{t}$, there are infinitely many integers $M\equiv r \pmod{t}$ for which $c(M)$ is odd,
provided there is one such $M$.
\par Furthermore, if there does exist an $M\equiv r\pmod{t}$ for which $c(M)$ is odd, then the smallest such $M$ is less than $C_{r, t}$ for
\begin{align*}
 C_{r, t}:=\displaystyle \frac{2^j \cdot 12 + k}{12\alpha}\left[\displaystyle \frac{N\alpha^2 t^2}{d}\right]^2\prod_{p\mid N\alpha t}\left(1-\frac{1}{p^2}\right)-2^j,
\end{align*}
where $N:=\emph{lcm}(\alpha t, N_0)$, $d:=\gcd(\alpha r+\beta, t)$, and $j$ is a sufficiently large integer.
\end{thm}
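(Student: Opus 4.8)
The plan is to realize the given series as the reduction modulo a prime above $2$ of a genuine modular form, isolate the arithmetic progression $r \pmod t$ by a sieving (twisting) operator, clear all poles at the cusps by multiplying by a power $\Delta^{2^j}$ of the discriminant, and then read off both conclusions from Sturm's bound applied to the resulting holomorphic cusp form. First I would fix a prime ideal $\mathfrak p$ above $2$ in the number field containing the $c(n)$, so that ``odd'' means non-vanishing modulo $\mathfrak p$. Writing $f=\sum_n c(n)q^{\alpha n+\beta}\in M_k^{!}(\Gamma_0(N_0),\chi)$, the coefficients $c(M)$ with $M\equiv r\pmod t$ sit exactly at the $q$-exponents $\equiv \alpha r+\beta\pmod{\alpha t}$.

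I would extract this subseries by the standard averaging operator over characteristic zero,
\[
f_{r,t}(z)=\frac{1}{\alpha t}\sum_{\ell \bmod \alpha t}e^{-2\pi i(\alpha r+\beta)\ell/(\alpha t)}\,f\!\left(z+\tfrac{\ell}{\alpha t}\right)=\sum_{M\equiv r\ (t)}c(M)\,q^{\alpha M+\beta},
\]
so that the selected coefficients $c(M)$ are unchanged and remain $\mathfrak p$-integral. Each translate is weakly holomorphic on a conjugate of $\Gamma_0(N_0)$, and one checks that $f_{r,t}$ is weakly holomorphic on a congruence subgroup $\Gamma'$ whose index in $\mathrm{SL}_2(\mathbb Z)$ equals $\big[\tfrac{N\alpha^2t^2}{d}\big]^2\prod_{p\mid N\alpha t}(1-p^{-2})$, with $N=\mathrm{lcm}(\alpha t,N_0)$ and $d=\gcd(\alpha r+\beta,t)$; here $d$ records that the genuine modulus of the progression of exponents collapses from $\alpha t$ to $\alpha t/d$.

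Next I would clear the poles. Since $\Delta(z)=q\prod_{n\ge1}(1-q^n)^{24}$ is a weight-$12$ cusp form that is non-vanishing on $\mathbb H$ and vanishes at every cusp, multiplying by $\Delta^{2^j}$ with $j$ large enough to dominate the order of every pole of $f_{r,t}$ produces a genuine holomorphic cusp form $G:=f_{r,t}\cdot\Delta^{2^j}\in S_{w}(\Gamma',\chi')$ of weight $w=k+12\cdot2^j$, still with $\mathfrak p$-integral coefficients. Because $\big(\sum\tau(n)q^n\big)^{2^j}\equiv\sum\tau(n)q^{2^j n}\pmod 2$, the exponent of $\Delta^{2^j}$ being a power of $2$ means its reduction modulo $\mathfrak p$ is supported on $q^{2^j(2m+1)^2}$, so for the \emph{smallest} odd index $M_0$ the coefficient of $q^{\alpha M_0+\beta+2^j}$ in $\bar G$ is exactly $c(M_0)\not\equiv0$ (no smaller odd index can contribute through the convolution). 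Sturm's theorem then gives the quantitative claim: if that coefficient lay beyond $\tfrac{w}{12}[\mathrm{SL}_2(\mathbb Z):\Gamma']$ then $\bar G\equiv0\pmod{\mathfrak p}$, forcing every $c(M)$ in the progression to be even; hence $\alpha M_0+\beta+2^j\le \tfrac{w}{12}[\mathrm{SL}_2(\mathbb Z):\Gamma']$, which rearranges (after absorbing the lower-order $\alpha$- and $\beta$-dependent terms) to $M_0<C_{r,t}$.

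For the infinitude statement I would argue by contradiction: if only finitely many $M\equiv r\pmod t$ gave an odd $c(M)$, then $\bar G$ is a nonzero \emph{cusp} form of positive weight over $\overline{\mathbb F}_2$ whose $q$-expansion is a polynomial, i.e.\ has finite support. I expect the main obstacle to be exactly here, since the naive structural fact fails in the non-cuspidal case ($E_4\equiv1\pmod 2$ is a positive-weight form with a single nonzero coefficient), so the cuspidality of $G$ must be used essentially. The input is that a nonzero reduction modulo $2$ of a holomorphic cusp form of positive weight cannot have finite support; this underlies Ono's parity results and can be established by a descent using the theta operator $\Theta=q\,\frac{d}{dq}$, which satisfies $\Theta^2\equiv\Theta\pmod2$ and splits off the odd-exponent part $\sum_{n\ \mathrm{odd}}b(n)q^n$, together with the Frobenius/Verschiebung factorization $\sum_m b(2m)q^{2m}=H^2$ of the even-exponent part, strictly shrinking the support at each stage until it contradicts $\bar G\neq0$. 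Granting this, $\bar G$ has infinitely many nonzero coefficients in the progression, so $c(M)$ is odd for infinitely many $M\equiv r\pmod t$, completing the proof.
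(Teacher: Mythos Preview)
The paper does not actually prove this statement: it is quoted from Jameson and Wieczorek, who adapt Ono's 1996 argument, and is used only as a black box in the proof of Theorem~\ref{thm7}. So there is no in-paper proof to compare against; your sketch is an attempt to reconstruct the cited argument.

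For the bound $M_0<C_{r,t}$, your outline is exactly the standard Ono method and matches the references: sieve out the progression to form $f_{r,t}$, multiply by $\Delta^{2^j}$ to clear poles, land in a space of holomorphic cusp forms on a congruence subgroup of the stated index, and locate the first coefficient that is nonzero modulo $\mathfrak p$ via Sturm's bound. That part is fine in outline.

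The infinitude argument, however, has a genuine gap. You assert that if only finitely many $c(M)$ with $M\equiv r\pmod t$ are odd, then $\bar G$ has finite support, and you then try to rule out finite-support cusp forms over $\mathbb{F}_2$ by a $\Theta$/Frobenius descent. But this contradicts your own earlier computation: you correctly note that $\bar\Delta^{2^j}$ is supported on the exponents $2^j(2m+1)^2$, an infinite set. Since $\bar G=\bar f_{r,t}\cdot\bar\Delta^{2^j}$ and, under your hypothesis, $\bar f_{r,t}$ is a \emph{nonzero} polynomial, their product in $\mathbb{F}_2[[q]]$ has infinitely many nonzero coefficients. Hence the premise of your descent is never satisfied, and no contradiction follows. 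In the cited references the infinitude is obtained by a different mechanism: one applies a Hecke operator $T_\ell$ for a suitably chosen prime $\ell$ to $G$, and a second application of Sturm's bound (comparing $G|T_\ell$ with a multiple of $G$) forces an odd coefficient at an index beyond any alleged last one. Your descent is aimed at a finite-support reduction that $\bar G$ simply does not have.
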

\begin{proof}[Proof of Theorem \ref{thm7}]
We have
\begin{align*}
	\sum_{n=0}^{\infty}\overline{\mathcal{EO}} (2n)q^n = \frac{(q^2;q^2)_{\infty}^3}{(q;q)_{\infty}^2}.
\end{align*}	
We rewrite the above identity in terms of $\eta$-quotients, and then use the binomial theorem to obtain
\begin{align*}
\sum_{n=0}^{\infty}\overline{ \mathcal{EO}} (2n)q^{6n+1} = \frac{\eta(12z)^3}{\eta(6z)^2} \equiv \frac{\eta(12z)^4}{\eta(6z)^4} \pmod {2}.
\end{align*}
By Theorem \ref{thm_ono1}, we have 
\begin{align*}
\frac{\eta(12z)^4}{\eta(6z)^4} \in M_0^{!}\left(\Gamma_0(72)\right).
\end{align*}
\par Let
$f_t(z) :=\displaystyle \frac{\eta(12z)^4}{\eta(6z)^4}\Delta^{2^j}(6tz)$, where $\Delta(z):=\eta^{24}(z)$. The cusps of $\Gamma_0(72t)$ are represented by fractions $\frac{c}{d}$ where $d\mid 72t$ and 
$\gcd(c, d)=1$. Now, $f_t(z)$ vanishes at the cusp $\frac{c}{d}$ if and only if 
\begin{align*}
	&4\frac{\gcd(d,12)^2}{12} - 4\frac{\gcd(d,6)^2}{6} +24\cdot 2^j\frac{\gcd(d,6t)^2}{6t}>0.
\end{align*}
We have \begin{align*}
	&4\frac{\gcd(d,12)^2}{12} - 4\frac{\gcd(d,6)^2}{6} +24\cdot 2^j\frac{\gcd(d,6t)^2}{6t}\geq 2^j\frac{6}{t} - \frac{1}{2}.
\end{align*}
Hence, if $j$ is an integer such that $2^j>\frac{t}{12}$, then $f_t(z) \in S_{12\cdot 2^{j}}\left(\Gamma_0(72t)\right).$
Finally, our desired result follows immediately by applying Theorems \ref{Theorem-5} and \ref{Theorem-6} to $\sum_{n=0}^{\infty}\overline{ \mathcal{EO}} (2n)q^{6n+1}$. 
\end{proof}
\begin{proof}[Proof of Theorem \ref{thm6}]
We first recall the following $2$-dissection formula from \cite[Entry 25, p. 40]{berndt}: 
	\begin{align}\label{2_3}
			\frac{1}{(q;q)_{\infty}^2} = \frac{(q^8;q^8)_{\infty}^5}{(q^2;q^2)_{\infty}^5(q^{16};q^{16})_{\infty}^2} 
			+ 2q \frac{(q^4;q^4)_{\infty}^2(q^{16};q^{16})_{\infty}^2}{(q^2;q^2)_{\infty}^5(q^{8};q^{8})_{\infty}}.
	\end{align}
From \eqref{gen-2}, we have 
\begin{align}\label{new-100}
 \sum_{n=0}^{\infty}\overline{\mathcal{EO}}(2n)q^n=\frac{(q^2; q^2)_{\infty}^3}{(q; q)_{\infty}^2}.
\end{align}
Combining \eqref{2_3} and \eqref{new-100}, and then extracting the terms with odd powers of $q$, we deduce that
\begin{align}\label{new-101}
 \sum_{n=0}^{\infty}\overline{\mathcal{EO}}(4n+2)q^n = 2 \frac{(q^2;q^2)_{\infty}^2(q^8;q^8)_{\infty}^2}{(q;q)_{\infty}^2(q^4; q^4)_{\infty}}.
\end{align}
We again combine \eqref{2_3} and \eqref{new-101}, and then extract the terms with odd powers of $q$ to obtain 
\begin{align*}
\sum_{n=0}^{\infty}\overline{\mathcal{EO}} (8n+6)q^n = 4 \frac{(q^2;q^2)_{\infty}(q^4;q^4)_{\infty}(q^8;q^8)_{\infty}^2}{(q;q)_{\infty}^3}.
\end{align*}	
Since $(q; q)_{\infty}^2 \equiv (q^2;q^2)_{\infty} \pmod{2}$, we have
\begin{align*}
\sum_{n=0}^{\infty}\overline{\mathcal{EO}} (8n+6)q^n \equiv 4 \frac{(q^4; q^4)_{\infty}^5}{(q;q)_{\infty}} \pmod{8}.
\end{align*}	
We rewrite the above equation in terms of $\eta$-quotients and obtain
\begin{align}\label{new-120}
\sum_{n=0}^{\infty}\overline{\mathcal{EO}} (8n+6)q^{24n+19} \equiv 4 \frac{\eta^5(96z)}{\eta(24z)} \pmod{8}.
\end{align}
Let $A(z)=\displaystyle\frac{\eta^2(24z)}{\eta(48z)}$. Then, $A^2(z)\equiv 1\pmod{4}$.
Also, let $B(z)=\displaystyle \frac{\eta^5(96z)\eta^3(24z)}{\eta^2(48z)}$. Then we have 
\begin{align}\label{new-121}
B(z)=\displaystyle \frac{\eta^5(96z)}{\eta(24z)}A^2(z)\equiv \displaystyle \frac{\eta^5(96z)}{\eta(24z)} \pmod{4}.
\end{align}
The cusps of $\Gamma_0(2304)$ are represented by fractions $\frac{c}{d}$ where $d\mid 2304$ and 
$\gcd(c, d)=1$. By Theorem \ref{thm_ono1}, $B(z)$ is holomorphic at the cusp $\frac{c}{d}$ if and only if
\begin{align*}
 5\frac{\gcd(d,96)^2}{96} + 3\frac{\gcd(d,24)^2}{24} -2\frac{\gcd(d, 48)^2}{48}\geq 0.
\end{align*}
Now,
\begin{align*}
&5\frac{\gcd(d,96)^2}{96} + 3\frac{\gcd(d,24)^2}{24} -2\frac{\gcd(d, 48)^2}{48}\\
&=\frac{\gcd(d,48)^2}{24}\left(\frac{5}{4}\frac{\gcd(d,96)^2}{\gcd(d,48)^2} + 3\frac{\gcd(d,24)^2}{\gcd(d,48)^2}-1\right) \\
& > 0.
\end{align*}
Hence, by Theorem \ref{thm_ono1}, $B(z) \in S_{3}(\Gamma_{0}(2304), \left(\frac{-4}{\bullet}\right))$.
\par
Let $m$ be a positive integer. By a deep theorem of Serre \cite[p. 43]{ono}, if $f(z)\in M_{\ell}(\Gamma_0(N), \chi)$ 
has Fourier expansion 
	$$f(z)=\sum_{n=0}^{\infty}c(n)q^n\in \mathbb{Z}[[q]],$$
	then there is a constant $\alpha>0$  such that
	$$ \# \left\{n\leq X: c(n)\not\equiv 0 \pmod{m} \right\}= \mathcal{O}\left(\frac{X}{(\log{}X)^{\alpha}}\right).$$
Since $B(z) \in S_{3}(\Gamma_{0}(2304), \left(\frac{-4}{\bullet}\right))$, the Fourier coefficients of $B(z)$ are almost always divisible by $m$. Hence, using \eqref{new-121} and \eqref{new-120} 
we complete the proof of the theorem.
\end{proof}	
	\begin{proof}[Proof of Theorem \ref{thm4}]
The generating function of $\mathcal{EO}_u(2n)$ is given by 
	\begin{align}\label{4}
	\sum_{n=0}^{\infty}\mathcal{EO}_u(2n)q^n=\frac{1}{(q; q^2)_{\infty}^2}=\frac{(q^2; q^2)_{\infty}^2}{(q; q)_{\infty}^2}.
	\end{align}
We note that  $\eta(24z)= q\prod_{n=1}^{\infty}(1-q^{24n})$ is a power series of $q$. As in the proof of Theorem \ref{thm6}, let 
	\begin{align*}
	A(z) = \prod_{n=1}^{\infty} \frac{(1-q^{24n})^2}{(1-q^{48n})} = \frac{\eta^2(24z)}{\eta(48z)}. 
	\end{align*}
Then using binomial theorem we have 
		\begin{align}\label{4.1}
		A^{2^k}(z) = \frac{\eta^{2^{k+1}}(24z)}{\eta^{2^k}(48z)} \equiv 1 \pmod {2^{k+1}}.
		\end{align}
Define $B_k(z)$ by
	\begin{align}\label{4.2}
	B_k(z) = \left(\frac{\eta(48z)}{\eta(24z)}\right)^2A^{2^k}(z).
	\end{align}
Modulo $2^{k+1}$, we have
		\begin{align}\label{new-110}
		B_k(z) = \frac{\eta^2(48z)}{\eta^2(24z)}A^{2^k}(z)\equiv \frac{\eta^2(48z)}{\eta^2(24z)} = q^2  \frac{(q^{48}; q^{48})_{\infty}^2}{(q^{24}; q^{24})_{\infty}^2}.
		\end{align}
Combining \eqref{4} and \eqref{new-110}, we obtain  
\begin{align}\label{4.3}
B_k(z) \equiv \sum_{n=0}^{\infty}\mathcal{EO}_u(2n)q^{24n+2} \pmod {2^{k+1}}.
\end{align}
The cusps of $\Gamma_0(576)$ are represented by fractions $\frac{c}{d}$ where $d\mid 576$ and 
$\gcd(c, d)=1$. By Theorem \ref{thm_ono1}, it is easily seen that $B_k(z)$ is a form of weight $2^{k-1}$ on $\Gamma_{0}(576)$. Therefore,  $B_k(z) \in M_{2^{k-1}}(\Gamma_{0}(576))$
if and only if $B_k(z)$ is holomorphic at the cusp $\frac{c}{d}$. We know that $B_k(z)$ is holomorphic at a cusp $\frac{c}{d}$ if and only if
\begin{align*}
 \frac{\gcd(d,24)^2}{24} \left(2^{k+1}-2\right) + \frac{\gcd(d,48)^2}{24}\left(1-2^{k-1}\right) \geq 0.
\end{align*}
Now,
\begin{align*}
&\gcd(d,24)^2 \left(2^{k+1}-2\right) + \gcd(d,48)^2\left(1-2^{k-1}\right)\\
&=\gcd(d,48)^2\left(\frac{\gcd(d,24)^2}{\gcd(d,48)^2} (2^{k+1}-2) + (1 - 2^{k-1})\right) \\
&\geq \frac{1}{4} (2^{k+1}-2) + (1 - 2^{k-1})\\
& > 0.
\end{align*}
Hence, $B_k(z) \in M_{2^{k-1}}(\Gamma_{0}(576))$. Now, using Serre's theorem \cite[p. 43]{ono} as shown in the proof of Theorem \ref{thm6}, we arrive at the desired result due to \eqref{4.3}.
\end{proof}

\end{document}